\newcommand{\crite}{\mathrm{crit}_{\epsilon}\,}
\newcommand{\criteb}{\mathrm{crit}_{\bar{\epsilon}}\,}
\newcommand{\vcrite}{\mathrm{vcrit}_{\epsilon}\,}
\newcommand{\vcriteb}{\mathrm{vcrit}_{\bar{\epsilon}}\,}
\newcommand{\vcrit}{\mathrm{vcrit}\,}
\newcommand{\crit}{\mathrm{crit}\,}
\newcommand{\argmin}{\mathrm{argmin}\,}
\newcommand{\dist}{\mathrm{dist}}
\newcommand{\cl}{\mathrm{cl}\,}
\newcommand{\RR}{\mathbb{R}}
\newcommand{\N}{\mathbb{N}}
\newcommand{\partialc}{\partial^c}
\newtheorem{theorem}{Theorem}[section]
\newtheorem{theorem*}{Theorem}[section]
\newtheorem{lemma}{Lemma}[section]
\newtheorem{proposition}{Proposition}[section]
\newtheorem{corollary}{Corollary}[section]
\newtheorem{assumption}{Assumption}
\newtheorem{remark}{Remark}[section]
\renewenvironment{proof}[1][]{\noindent {\bf Proof #1:\;}}{\hfill $\Box$}
\providecommand{\keywords}[1]{\textbf{\textbf{Keywords. }} #1}
\newcommand{\R}{\mathbb{R}}
\title{Inexact subgradient methods for semialgebraic functions}
\author{Jérôme Bolte\footnote{Toulouse School of Economics, 
  Universit\'e Toulouse Capitole,
  Toulouse, France.} \and Tam Le \footnote{LPSM, Université Paris Cité.} \and Eric Moulines \footnote{CMAP, Ecole polytechnique, Paris and Mohamed Bin Zayed University of AI} \and Edouard Pauwels\footnote{Toulouse School of Economics, 
  Universit\'e Toulouse Capitole et IUF,
  Toulouse, France.}}
\begin{document}

\title{Inexact subgradient methods for semialgebraic functions
}


\author{Jérôme Bolte\footnote{Toulouse School of Economics, 
  Universit\'e Toulouse Capitole,
  Toulouse, France.} \and Tam Le \footnote{LPSM, Université Paris Cité.} \and Eric Moulines \footnote{CMAP, Ecole polytechnique, Paris and Mohamed Bin Zayed University of AI} \and Edouard Pauwels\footnote{Toulouse School of Economics, 
  Universit\'e Toulouse Capitole et IUF,
  Toulouse, France.}}

\maketitle

\begin{abstract} Motivated by the extensive application of approximate gradients in machine learning and optimization, we investigate inexact subgradient methods subject to persistent additive errors. Within a nonconvex semialgebraic framework, assuming boundedness or coercivity, we establish that the method yields iterates that eventually fluctuate near the critical set at a proximity characterized by an $O(\epsilon^\rho)$ distance, where $\epsilon$ denotes the magnitude of subgradient evaluation errors, and $\rho$ encapsulates geometric characteristics of the underlying problem. Our analysis comprehensively addresses both vanishing and constant step-size regimes. Notably, the latter regime inherently enlarges the fluctuation region, yet this enlargement remains on the order of $\epsilon^\rho$. In the convex scenario, employing a universal error bound applicable to coercive semialgebraic functions, we derive novel complexity results concerning averaged iterates. Additionally, our study produces auxiliary results of independent interest, including descent-type lemmas for nonsmooth nonconvex functions and an invariance principle governing the behavior of algorithmic sequences under small-step limits.

\end{abstract}

\keywords{Inexact subgradient, Clarke subdifferential, Nonsmooth nonconvex optimization, Path differentiable functions, First-order methods, Semialgebraic functions}

\textbf{AMS subject classifications:} {68Q25, 49J52, 49J53, 90C70}

\section{Introduction}
Let $f \colon \R^p \to \R$ be locally Lipschitz and consider the unconstrained global minimization problem
\begin{align*}
	\min_{x \in \R^p} f(x).
\end{align*}
An important tool for addressing such problems in  high-dimensional nonsmooth settings, are   subgradient algorithms, see  e.g.\cite{shor2012minimization,solodov1998error,davis2020stochastic,nguyen2023stochastic}. We focus here on the {\em inexact} or {\em biased subgradient method}, see \cite{solodov1998error}: 
\begin{equation*}
    x_{k+1} \in x_k - \alpha_k \left[\partial^c f(x_k)+ \bar B(0,\epsilon)\right], \: x_0\in \R^n\,,
\end{equation*}
where for all $k \in \N$, $\alpha_k > 0$ is a sequence of positive step sizes,  $\epsilon>0$ is an error or a bias level and $\partial^c$ denotes the Clarke subdifferential \cite{clarke1983optimization}. 

First, let us provide some motivations and insights into this method, particularly on its most distinctive features: inexact oracle evaluation and nonsmoothness.

 There are many sources of error in the evaluation of subgradients. They stem from numerical errors or gradient approximation techniques, such as complex derivation oracles \cite{griewank2003piggyback,makela2002survey,blondel2022efficient,bolte2023one} or low-accuracy calculations often used in neural network training \cite{de2017understanding}. The recent decade has seen a rapid proliferation of approximation methods, driven by the escalating dimensionality and structural complexity of optimization problems, such as large sums, composite functions, intrinsically complex neural network layers, differentiable programming, and federated learning frameworks. These developments have led to what can be broadly described as \textit{sketchy calculations}. Our approach leverages sketch calculus methods to achieve computational efficiency, reduce memory requirements, and effectively manage heterogeneous data. Examples of sketching techniques include mini-batching \cite{lecun1989backpropagation}, sparsification \cite{wangni2018gradient}, sophisticated compression methods \cite{mishchenko2019distributed}, and incremental computation strategies \cite{solodov1998error,nedic2010effect}. Federated and distributed learning frameworks, discussed in \cite{mishchenko2019distributed}, further exemplify scenarios involving inherently inexact subgradient computations. While stochastic optimization also extensively utilizes noisy gradient approximations due to random subsampling or Monte Carlo techniques \cite{Ermoliev1983StochasticQM,nemirovski2009robust,spall2012stochastic,ghadimi2013stochastic,ghadimi2016mini,lan2020algorithms}, we explicitly note that stochastic approximation techniques fall beyond the scope of this paper and warrant dedicated investigation.
 
  Nonsmooth (and inexact) subgradient methods have deep roots in optimization with the many works of Shor \cite{shor2012minimization}, Ermoliev \cite{Ermoliev1983StochasticQM}, Norkin \cite{Norkin1978nonlocal}, Nemirovskii \cite{nemirovskij1983problem}  and also the pioneering work of Solodov-Zavriev \cite{solodov1998error}.  
  Nonsmoothness is indeed  prevalent for both convex and nonconvex problems.  In the convex world, robustness questions naturally provide nonsmooth max problems \cite{ben2009robust}, regularization techniques resort to nonsmooth regularizers \cite{donoho2006compressed,tibshirani1996regression,combettes2011proximal,bach2011convex}, while bundle methods rely on complex polyhedral oracles, see e.g. \cite{makela2002survey}. Modern machine learning provides a wealth of applications involving nonsmoothness. While this aspect has always been witnessed in deep learning through standard building blocks such as ReLU activations and MaxPooling, it appears in more recent contexts. For instance, sorting procedures may be used to promote sparsity \cite{fedus2022switch}, while optimization layers \cite{amos2017optnet} may be used to refine learning abilities. 

These modern situations make the analysis of gradient methods more difficult. In addition to approximation errors and nonsmoothness, let us mention the use non-vanishing stepsizes schedules as  for instance in machine learning  contexts \cite{radford2018improving,loshchilov2017sgdr,cohen2020edge}. Studying non-vanishing steps is more challenging since they no longer mitigate oscillations and errors induced by nonsmoothness.

The main contributions of this article, emphasizing the most novel aspects, can be summarized as follows:
\begin{itemize}
\item[---] We develop an analytical framework to explicitly handle error terms, overcoming limitations of classical Lyapunov-based arguments typically used for subgradient analysis.
\item[---] We rigorously demonstrate that \textit{all iterates eventually fluctuate around the critical set}, with fluctuations confined within an $O(\epsilon^\rho)$ bound, directly related to the local geometric structure of the optimization landscape.
\item[---] We introduce several intermediate results that provide valuable insights for vanishing and constant step-size regimes. In particular, \Cref{lem:limitingSet} constitutes a general, independently significant result applicable to the ODE method, characterizing the limit points of sequences generated by small constant step-size recursions.
\item[---] Our analysis relies on weak and easily verifiable regularity assumptions on the cost function $f$. Specifically, our results hold broadly for local Lipschitz semialgebraic functions (or more generally, functions definable in an o-minimal structure), as detailed in \Cref{rem:localLipschitz} and \Cref{prop:coercive_semialg}.
\item[---] In the convex setting, we avoid restrictive assumptions such as compactness or strong error bounds. Instead, we simply require convexity, coercivity, and semialgebraicity.
\end{itemize}

Our work follows a substantial body of research dedicated to nonsmooth and inexact (sub)gradient methods. We provide below a concise, though not exhaustive, overview of this literature.

In the convex setting, there has been extensive research dating back to early studies within the Russian-Soviet school, see e.g.,  \cite{nedic2010effect,huh2014online} and references therein. 
 The inexact subgradient algorithm for Clarke regular nonconvex objective functions was first studied in the pioneering work of \cite{solodov1998error}. 
More recently, the analysis of stochastic gradient methods with biased errors has known several advances  \cite{tadic2011asymptotic,doucet2017asymptotic,ramaswamy2017analysis,nguyen2023stochastic}. These studies either focus on smooth objectives or rely on strong assumptions such as sharpness or metric regularity.  \cite{doucet2017asymptotic} provides results qualitatively similar  to ours, but exclusively for continuously differentiable targets; see also the recent review \cite{dieuleveut2023stochastic} for a broader perspective. For deterministic set-valued recursions, the seminal work \cite{benaim2012} provides fundamental qualitative insights. 

Our convergence analysis is based on the comparison of discrete-time sequences with the trajectories of continuous-time dynamical systems. This is a classical framework initiated by \cite{ljung1977analysis} and studied by \cite{benveniste2012adaptive,kushner2003stochastic,benaim1998recursive}, among others. This approach is particularly flexible, allowing to analyze stochastic approximation algorithms with constant or decreasing step sizes. 
Notable studies in the smooth case are \cite{benaim1998recursive,fort1999asymptotic}, \cite[Chapter~8]{kushner2003stochastic} and \cite{borkar2009stochastic}. This framework was  extended to situations where continuous-time dynamics are not determined by an ordinary differential equation but by upper semicontinuous differential inclusions, hence capturing nonsmooth algorithms. General results were first established in \cite{benaim2005stochastic}; see also \cite{benaim2012} for constant steps. The convergence of stochastic subgradient methods was then shown in \cite{majewski2018analysis} for Clarke regular functions, and in \cite{davis2020stochastic} for the broad class of definable functions.

Closely related to our work, \cite{josz2023global} analyzes the global stability of subgradient methods with constant stepsizes applied to locally Lipschitz, coercive, and definable functions. It is proved that iterates of methods approximated by subgradient trajectories  stabilize around critical points. Results specifically cover the subgradient method with momentum, stochastic subgradient methods with random reshuffling and momentum, and cyclic coordinate descent methods with random permutations.  
The key difficulty in extending this latter result to include subgradient errors lies in characterizing iterates behavior near critical levels, where standard descent mechanisms fail. We explain this in more detail in the reading keys of \Cref{section:readingKeys}.

\bigskip
Other studies worth mentioning, illustrating the scope of the dynamical system approach include \cite{Josz2023} which establishes a discrete notion of Lyapunov stability for local minima of a locally Lipschitz and semialgebraic function, and \cite{xiao2023convergence} dealing with the boundedness of the iterates in the stochastic setting. A parallel line of works also consider the stochastic approximation sequence as a Markovian process in order to study the limit of invariant measures as the steps goes to zero \cite[Section 8.4.3]{kushner2003stochastic}. This strategy proves particularly useful when studying constant step stochastic and nonsmooth algorithms \cite{roth2013stochastic,bianchi2019constant,bianchi2022convergence}. This literature is however quite distinct from our work and \cite{josz2023global} in terms of convergence notions and proof techniques.

\bigskip

\section{Preliminaries and statement of the main results}

\subsection{Notations.}\label{sec:notations}

For $f \colon \RR^p \to \RR$ locally Lipschitz, we denote its Clarke subdifferential by $\partialc f$,  and, for $\epsilon>0$, we define
\begin{align}
	\crite f&= \left\{x \ : \  \dist(0,\partialc f(x)) \leq \epsilon \right\} \nonumber, \\
	\vcrite f&= f(\crite f),
    \label{eq:critvalues}
\end{align}
 the $\epsilon$-critical set, which is closed and the set of {\em $\epsilon$-critical values}. When $l\notin \vcrite f$, it is called an {\em $\epsilon$-regular value}. 
We also denote by $\crit f$ the critical set and $\vcrit f$ the set of critical values. 
The set of minimizers is $\argmin  f$.  

For notational convenience, we write the algorithm as
\begin{equation}
    x_{k+1} = x_k - \alpha_k v_\epsilon(x_k),
	\label{eq:subgradVanishingStepSize}
\end{equation} 
where $v_\epsilon : \R^p \to \R$ represent the biased oracle, i.e. satisfies $\dist(v_\epsilon(x),\partialc f(x)) \allowbreak \leq \epsilon$ for all $x \in \R^p$.

Throughout the next sections, we use the shorthand $[a \leq g \leq b]$, for real numbers $a,b$ and function $g \colon \RR^p \to \RR$, to denote $\{ x \in \RR^p \ : \ a \leq g(x) \leq b\}$. We also use the same notations for $=$ and $<$, $>$.

We denote the Euclidean norm by $\|\cdot\|$. For $A \subset \RR^p$, we let $\|A\| = \sup_{a\in A} \|a\|$. For a subset, $A \subset \R^p$, $\overline{A}$ is its closure, and $A^c$ its complement. For $\epsilon > 0$, $\bar B(0,\epsilon)$ is the closed ball of center $0$ and radius $\epsilon$. We denote the distance of a point $x$ to a compact set $A \subset \R^p$ by $\dist (x, A) := \inf_{z \in A} \|x - z\|$. 
Given a set-valued map $Z:\R^p\rightrightarrows\R^q$, we denote the graph of $Z$ by $$\operatorname{graph}[Z] := \left\{(x,y)  \in \R^p \times \R^q \ : \ y \in Z(x) \right\}.$$

We call a function $f$ \emph{coercive} when for all $a \in \R$, the sublevel sets $[f \leq a]$ are compact, this is equivalent to $f(x) \to \infty$ when $\|x\| \to \infty$.

\subsection{Main results}

\paragraph{The nonconvex setting}
 We will work under the following assumption.
\begin{assumption}
    \label{ass:mainSemiAlgebraic}
    The function $f \colon \R^p\to\R$ is $L$-Lipschitz, lower-bounded, semialgebraic with $\crite f$ bounded for some $\epsilon>0$.
\end{assumption}

As stated in \Cref{lem:coercive}, \Cref{ass:mainSemiAlgebraic} ensures that $f$ is also coercive. Our first main result relates to vanishing step sizes.
\begin{theorem}[Convergence for biased subgradient method with vanishing step size]
	Under \Cref{ass:mainSemiAlgebraic}, there is $\bar{\epsilon} >0$, $C > 0$ $\rho > 0$ such that for any $\epsilon < \bar{\epsilon}$, $x_0 \in \R^p$, there is $\bar{\alpha} > 0$, 
	such that for any $(x_k)_{k \in \N}$ given by \eqref{eq:subgradVanishingStepSize} with $0<\alpha_k \leq \bar{\alpha}$ for all $k \in \N$,  $\alpha_k \to 0$ and $\sum_{k =0}^\infty \alpha_k = \infty$, we have 
	\begin{align*}
		\lim_{k \to \infty} \,\dist (f(x_k), \vcrite f ) &=0,\\
		\limsup_{k \to \infty} \,\dist (x_k, \crit f ) &\leq C \epsilon^{\rho}. 
	\end{align*}
	\label{th:vanishingStepSizeSemiAlgebraic}
\end{theorem}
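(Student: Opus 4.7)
The proof strategy follows the ODE method: compare the discrete recursion to the differential inclusion
\begin{equation*}
\dot{x}(t) \in -\partial^c f(x(t)) - \bar B(0,\epsilon),
\end{equation*}
for which $f$ should act as a Lyapunov function away from the enlarged critical set $\crite f$. The first step is to establish that the iterates remain in a bounded set. Combining \Cref{ass:mainSemiAlgebraic} with the coercivity guaranteed by \Cref{lem:coercive}, one can show that whenever $x_k \notin \crite f$, the descent-like inequality $f(x_{k+1}) \leq f(x_k) - c\,\alpha_k$ holds for some $c>0$ as soon as $\bar\alpha$ is small enough, because the biased direction $-v_\epsilon(x_k)$ makes an acute-enough angle with the Clarke subdifferential when $\dist(0,\partial^c f(x_k)) > \epsilon$. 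Coercivity then confines the iterates to a compact sublevel set of $f$.

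Next, I would invoke the ODE method in its modern nonsmooth form (as in \cite{davis2020stochastic}): the affine interpolants built from $(x_k)$ and $(\alpha_k)$ are, on compact time windows, close to solutions of the above differential inclusion; this uses the boundedness of $v_\epsilon$ (itself consequence of the Lipschitz property of $f$ and $\dist(v_\epsilon,\partial^c f)\leq\epsilon$) and the upper semicontinuity and convex-compact values of $x \mapsto \partial^c f(x) + \bar B(0,\epsilon)$. The limit set of the sequence is therefore contained in the internal chain transitive set of the inclusion. Using a nonsmooth chain rule (valid in the semialgebraic setting via the projection formula for the Clarke subdifferential composed with an absolutely continuous curve), along any solution $x(\cdot)$ and as long as $x(t)\notin \crite f$, one gets
\begin{equation*}
\frac{d}{dt}f(x(t)) \leq -\bigl(\dist(0,\partial^c f(x(t))) - \epsilon\bigr)\,\dist(0,\partial^c f(x(t))) < 0.
\end{equation*}
Hence every solution eventually enters and remains in $\crite f$, so the limit points of $(x_k)$ lie in $\crite f$, and $f(x_k)$ accumulates on $f(\crite f) = \vcrite f$; since the semialgebraic Sard theorem gives $\vcrit f$ finite and $\vcrite f \to \vcrit f$ as $\epsilon \downarrow 0$, the first conclusion follows.

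For the quantitative distance bound, the key ingredient is a Łojasiewicz-type inequality for semialgebraic functions applied to the function $x \mapsto \dist(0,\partial^c f(x))$ near the critical set $\crit f$. Such an inequality yields an exponent $\rho>0$ and a constant $C>0$ such that, locally around $\crit f$,
\begin{equation*}
\dist(x,\crit f) \leq C\,\dist(0,\partial^c f(x))^{\rho},
\end{equation*}
so $\crite f \subset \{x : \dist(x,\crit f) \leq C\epsilon^{\rho}\}$ for all small enough $\epsilon$. Combined with the fact that limit points of $(x_k)$ are in $\crite f$, this gives the announced $O(\epsilon^\rho)$ control.

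\textbf{Main obstacle.} The delicate point is the ODE-method comparison under merely vanishing (not summable-square) step sizes and a non-vanishing additive bias $\epsilon$: one must propagate the constant $\epsilon$ into the limiting inclusion without letting it spoil the Lyapunov decrease, and simultaneously obtain that the sequence stays in a common compact set independent of the tail of $(\alpha_k)$. The descent-like lemma for nonsmooth nonconvex problems announced in the introduction is exactly what makes both ingredients quantitative.
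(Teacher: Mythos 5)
Your overall architecture (boundedness via coercivity, ODE-method comparison with the inclusion $\dot x \in -\partial^c f(x) + \bar B(0,\epsilon)$, Lyapunov decrease via the chain rule, then a geometric inequality to convert criticality into distance) is the same as the paper's, but two of your key steps do not hold as stated. First, the pointwise discrete descent inequality $f(x_{k+1}) \leq f(x_k) - c\,\alpha_k$ for $x_k \notin \crite f$ is false for general nonsmooth Lipschitz $f$: a single Euler step along $-v$ with $v$ near $\partial^c f(x_k)$ need not decrease $f$, no matter how small $\alpha_k$ is (this is precisely why subgradient methods are not descent methods). The paper obtains boundedness and approximate descent only at the level of \emph{interpolated curves}, by a compactness/contradiction argument (Arzel\`a--Ascoli plus \Cref{lem:discreteContinuous}) that transfers the continuous-time weak Lyapunov property of \Cref{lem:lyapunovbiased} to the discrete iterates over a fixed time horizon; your argument needs to be replaced by something of that type.

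Second, and more seriously, your claim that every solution of the inclusion ``eventually enters and remains in $\crite f$'', hence that the limit points of $(x_k)$ lie in $\crite f$, is unjustified and is not what is true. \Cref{lem:lyapunovbiased} only yields $\liminf_{t\to\infty}\dist(0,\partial^c f(x(t))) \leq \epsilon$: the trajectory can repeatedly leave $\crite f$ while its \emph{values} stay trapped in a connected component of $\vcrite f$, drifting a positive distance away from $\crit f$ before criticality forces it back. If the limit points were in $\crite f$, metric subregularity alone would give $\dist(x,\crit f)\leq c\,\epsilon^{\beta}$; the paper's exponent $\rho=\min\{\tfrac{(1-\theta)\beta}{\theta(1+\beta)},\beta\}$ is in general strictly smaller precisely because of these excursions. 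Controlling them is the content of \Cref{lem:approx_stationarity_curves}: the KL inequality bounds the length of each component of $\vcrite f$ by $O(\epsilon^{1/\theta})$, the quantitative estimate of \Cref{lem:lyapunovbiased}.4 bounds the time before the trajectory returns to $(\epsilon+\epsilon^{\alpha})$-criticality, the Lipschitz bound converts that time into a spatial displacement, and only then is \eqref{eq:MR} applied. You also need the invariance of the limit set (Bena\"{\i}m--Hofbauer--Sorin) to put yourself in the setting where $f$ is constant-valued in $\vcrite f$ along whole solution curves; without that, neither the value convergence $\dist(f(x_k),\vcrite f)\to 0$ (which requires excluding \emph{every} regular accumulation value, as in the repulsivity lemma) nor the distance estimate follows.
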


Our second main result relates to small constant step sizes.
\begin{theorem}[Convergence for biased subgradient method with constant step size]
	Under \Cref{ass:mainSemiAlgebraic}, there is $\bar{\epsilon} >0$, $C > 0$ $\rho > 0$ such that for any $\epsilon < \bar{\epsilon}$, $x_0 \in \R^p$, and $(x_k(\alpha))_{k \in \N}$ given by \eqref{eq:subgradVanishingStepSize} with $\alpha_k = \alpha > 0$ 
	for all $k \in \N$, we have 
	\begin{align*}
		\lim_{\alpha \to 0^+} \,\limsup_{k \to \infty} \,\dist (f(x_k(\alpha)), \vcrite f ) &=0,\\
		\limsup_{\alpha \to 0^+}\, \limsup_{k \to \infty}\, \dist (x_k(\alpha), \crit f ) &\leq C \epsilon^{\rho}. 
	\end{align*}
	\label{th:constantStepSizeSemiAlgebraic}
\end{theorem}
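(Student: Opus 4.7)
The plan is to reduce the analysis of the constant-step iterates to that of the differential inclusion $\dot x \in -\partialc f(x) - \bar B(0,\epsilon)$, mirroring the strategy for \Cref{th:vanishingStepSizeSemiAlgebraic} but exploiting the general lemma on limits of affine interpolants of constant-step recursions (the referenced \emph{limitingSet} lemma) in place of the standard vanishing-step ODE argument.

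First, I would establish uniform boundedness of $(x_k(\alpha))_k$ for $\alpha$ sufficiently small. The nonsmooth descent lemma developed for \Cref{th:vanishingStepSizeSemiAlgebraic} yields, for $x_k \notin \crite f$, an inequality of the form $f(x_{k+1}) \leq f(x_k) - \alpha\,\bigl(\dist(0,\partialc f(x_k)) - \epsilon\bigr)\,\dist(0,\partialc f(x_k)) + O(\alpha^2)$, while $f(x_{k+1}) \leq f(x_k) + O(\alpha)$ holds in general thanks to $\|v_\epsilon(x)\| \leq L + \epsilon$. Combining this with the coercivity of $f$ (guaranteed by Lemma \emph{coercive}) and the boundedness of $\crite f$ confines the iterates to a compact sub-level set $[f \leq M]$ once $\alpha$ is small, uniformly in $k$ and in $\alpha \leq \bar\alpha$.

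Second, I would define the piecewise-affine interpolant $X^\alpha(t)$ passing through $x_k(\alpha)$ at node times $t_k = k\alpha$. The lemma on limiting sets of affine interpolants then ensures that any cluster point (in the sense $k\alpha \to \infty$ followed by $\alpha \to 0^+$) of the iterates lies in an invariant set of the differential inclusion $\dot x \in -\partialc f(x) - \bar B(0,\epsilon)$. Exploiting $f$ as a strict Lyapunov function--along any absolutely continuous solution, $\tfrac{d}{dt} f(X(t)) \leq -\dist(0,\partialc f(X(t)))\bigl(\dist(0,\partialc f(X(t))) - \epsilon\bigr)$, which is strictly negative off $\crite f$--the invariant set must be contained in $\crite f$ and $f$ must be constant on each of its chain-transitive components. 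Combined with the semialgebraic Sard theorem, which for $\bar\epsilon$ small forces $\vcrite f$ to be a finite union of small intervals around the finite set $\vcrit f$, this yields $\limsup_{k \to \infty} \dist(f(x_k(\alpha)), \vcrite f) \to 0$ as $\alpha \to 0^+$, the first claim.

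For the quantitative distance bound, I would invoke a \L{}ojasiewicz-type inequality for semialgebraic functions: in a neighborhood of $\crit f$, one has $\dist(x, \crit f) \leq C\,\dist(0,\partialc f(x))^\rho$ for some $\rho > 0$ depending on the local geometry. Applied in conjunction with the first step, which forces $x_k(\alpha)$ eventually into $\criteb f$ for a slightly enlarged $\bar\epsilon$, this yields $\dist(x_k(\alpha), \crit f) \leq C \epsilon^\rho$ up to a correction that vanishes with $\alpha$. The main obstacle is controlling the \emph{double limit} $\limsup_{\alpha \to 0^+}\limsup_{k \to \infty}$: the persistent $O(\alpha)$ drift arising from the non-vanishing step, together with the non-vanishing bias $\epsilon$, can a priori accumulate; the affine-interpolant lemma is precisely the tool that routes around this, by delivering uniform convergence of the interpolants on compact intervals after the appropriate time shift and thereby decoupling the $\alpha$-limit from the $k$-limit.
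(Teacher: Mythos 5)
Your overall architecture (uniform boundedness via coercivity and a descent lemma, affine interpolants of the constant-step recursion, passage to an invariant set of \eqref{eq:biasDI} via the limiting-set lemma, then a geometric inequality to obtain the $C\epsilon^{\rho}$ bound) matches the paper's. But there is a genuine gap at your LaSalle step. You assert that, because $\tfrac{d}{dt} f(X(t)) \leq -\dist(0,\partialc f(X(t)))\bigl(\dist(0,\partialc f(X(t))) - \epsilon\bigr)$ is strictly negative off $\crite f$, ``the invariant set must be contained in $\crite f$ and $f$ must be constant on each of its chain-transitive components.'' That is the standard argument for the unbiased flow ($\epsilon=0$); it fails here because $f$ is only a \emph{weak} Lyapunov function: the right-hand side is positive wherever $\dist(0,\partialc f)<\epsilon$, so $f$ may \emph{increase} inside $\crite f$. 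An invariant set of \eqref{eq:biasDI} can therefore carry trajectories whose values oscillate across an entire segment of $\vcrite f$ and which repeatedly exit $\crite f$. The correct conclusion, which the paper extracts from the discrete repulsivity of $\epsilon$-regular values (\Cref{lem:descentSequence2}) rather than from a Lyapunov argument on the invariant set, is only $f(L)\subset \vcrite f$: the \emph{values} on the limit set are $\epsilon$-critical values, not that the limit points are $\epsilon$-critical points.

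This gap propagates to your final step. Since you cannot guarantee that $x_k(\alpha)$ (or the limit set $L$) eventually lies in $\criteb f$, you cannot apply a metric-subregularity inequality $\dist(x,\crit f)\leq C\,\dist(0,\partialc f(x))^{\beta}$ directly at the iterates: a point can have $f$-value in $\vcrite f$ while its subdifferential stays far from $0$. The paper's \Cref{lem:approx_stationarity_curves} is precisely the missing device: for a solution curve with $f(x(\R_+))\subset\vcrite f$ it first uses \eqref{eq:KL} to bound the length of the relevant connected component of $\vcrite f$ by $O(\epsilon^{1/\theta})$, then the quantitative decrease estimate (\Cref{lem:lyapunovbiased}.4) to locate, near any given time $t$, a time $t'$ with $\dist(0,\partialc f(x(t')))\leq \epsilon+\epsilon^{\alpha}$, and only then \eqref{eq:MR} at $x(t')$ together with the Lipschitz bound on $\|x(t)-x(t')\|$; the exponent $\rho=\min\{(1-\theta)\beta/(\theta(1+\beta)),\beta\}$ comes from optimizing over $\alpha$. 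Both the Kurdyka-\L ojasiewicz inequality and metric subregularity are needed; a single \L ojasiewicz-type inequality applied at the iterates, as in your sketch, does not yield the bound.
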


Both these results are asymptotic regarding the iteration counter $k$. Obtaining non asymptotic convergence guarantees is open, even for the plain subgradient method $(\epsilon = 0)$.

\begin{remark}[Fluctuations with constant step sizes]In the previous theorem the limit with respect to $\alpha$, can be interpreted as follows: for any $c > 0$, there exists $\bar{\alpha}$, such that for all $0<\alpha \leq \bar{\alpha}$, the quantity $\dist (x_k(\alpha), \crit f )$ is of order $(C+c) \epsilon^{\rho}$ for sufficiently large $k \in \N$, where $C$ and $\rho$ are given in the theorems. In other words, the overall order of magnitude is $\epsilon^\rho$ with a constant depending on the step size, for small step sizes. Similar results were obtained by \cite{josz2023global} when $\epsilon = 0$.
\end{remark}

\begin{remark}[Local Lipschitz continuity]
    \label{rem:localLipschitz}
    As it is classical in optimization, the global Lipschitz continuity of $f$ in \Cref{ass:mainSemiAlgebraic} can be relaxed to mere local Lipschitz continuity provided that there is a mechanism ensuring that the generated sequence remains in a compact set. This is the case for our biased subgradient method. Indeed, considering $f$ as in \Cref{ass:mainSemiAlgebraic}, but only locally Lipschitz, the sequence remains bounded for small enough step sizes thanks to \Cref{lem:descentSequence} since the objective is coercive as stated in \Cref{lem:coercive} and $\vcrite f$ is upper bounded. Hence \Cref{th:vanishingStepSizeSemiAlgebraic} and \Cref{th:constantStepSizeSemiAlgebraic} actually holds if $f$ in \Cref{ass:mainSemiAlgebraic} is only locally Lipschitz rather than globally Lipschitz. We stick to global Lipschitz continuity of the objective $f$ for simplicity of the presentation. 
\end{remark}

\paragraph{The convex setting}
We complete these two results with an explicit estimate for the convex case. Our result is in the line of usual results, see, e.g., \cite{nedic2010effect,huh2014online}, but does not use any compactness or strong error bounds.

Let us recall \cite[Theorem 3]{bolte2017error}: for a coercive, convex, semialgebraic function $f \colon \R^p \to \R$, there exists $c>0$ and $a \in (0,1]$, such that
\begin{equation}
\label{eq:errorbound}
    \frac{c}{2}\left((f(x) - \min f)^a + (f(x) - \min f)\right) \geq \dist(x, \argmin f) \quad \forall x \in \R^p.
\end{equation}

\begin{theorem}[Biased subgradient complexity: convex case]
	Let $f \colon \allowbreak \R^p \to \R$ be $L$-Lipschitz, semialgebraic, convex and coercive. Then with $a \in (0,1]$ and $c>0$, as in  \eqref{eq:errorbound}, for any sequence generated by \eqref{eq:subgradVanishingStepSize}, we have
    \begin{align*}
	    \left( 2 - a - \epsilon c \right) \frac{\sum_{i=0}^k \alpha_i (f(x_i) - f^*)}{\sum_{i=0}^k \alpha_i}  & \leq  (1 - a)(\epsilon c)^{\frac{1}{1-a}} \\ & + \frac{\|x_0 - x_*\|^2 + (L+\epsilon)^2\sum_{i=0}^k  \alpha_i^2 }{\sum_{i=0}^k \alpha_i}.
    \end{align*}
    We understand for $a = 1$, the value $(1 - a)(\epsilon c)^{\frac{1}{1-a}}$ as $0$ if $\epsilon c < 1$ and undefined or infinite otherwise. 
    \label{th:convexSemiAlgebraic}
\end{theorem}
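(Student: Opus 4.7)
The plan is to run the classical distance-to-minimizer Lyapunov argument for convex subgradient methods, then use the error bound \eqref{eq:errorbound} together with Young's inequality to absorb the bias. First I would write the oracle as $v_\epsilon(x_k)=g_k+e_k$ with $g_k\in\partial f(x_k)$, $\|g_k\|\le L$ (since $f$ is $L$-Lipschitz convex), and $\|e_k\|\le\epsilon$, so that $\|v_\epsilon(x_k)\|\le L+\epsilon$.

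At iteration $k$, pick $x_\star=\proj_{\argmin f}(x_k)$, so that $\|x_k-x_\star\|=\dist(x_k,\argmin f)$, while $\|x_{k+1}-x_\star\|^2\ge\dist(x_{k+1},\argmin f)^2$. Expanding the squared norm along the recursion gives
\begin{equation*}
\dist(x_{k+1},\argmin f)^2 \le \dist(x_k,\argmin f)^2 - 2\alpha_k\langle g_k,x_k-x_\star\rangle - 2\alpha_k\langle e_k,x_k-x_\star\rangle + \alpha_k^2(L+\epsilon)^2.
\end{equation*}
Convexity of $f$ turns $\langle g_k,x_k-x_\star\rangle$ into $f(x_k)-f^*$, and Cauchy--Schwarz gives $-\langle e_k,x_k-x_\star\rangle\le\epsilon\,\dist(x_k,\argmin f)$. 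The key move is then the error bound \eqref{eq:errorbound} (valid since $f$ is convex, coercive, semialgebraic): writing $X_k:=f(x_k)-f^*$,
\begin{equation*}
2\alpha_k\epsilon\,\dist(x_k,\argmin f) \le \alpha_k\epsilon c\bigl(X_k^a+X_k\bigr).
\end{equation*}

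To linearize the sub-linear $X_k^a$ term when $a<1$, I would apply Young's inequality with conjugate exponents $\tfrac{1}{1-a}$ and $\tfrac{1}{a}$ to $\epsilon c\cdot X_k^a$, yielding
\begin{equation*}
\epsilon c\,X_k^a \le (1-a)(\epsilon c)^{\frac{1}{1-a}} + a X_k.
\end{equation*}
Substituting back collects the $X_k$ terms with coefficient $-\alpha_k(2-a-\epsilon c)$ and leaves a constant residual $\alpha_k(1-a)(\epsilon c)^{\frac{1}{1-a}}$, so the one-step recursion becomes
\begin{equation*}
\dist(x_{k+1},\argmin f)^2 \le \dist(x_k,\argmin f)^2 - \alpha_k(2-a-\epsilon c)X_k + \alpha_k(1-a)(\epsilon c)^{\frac{1}{1-a}} + \alpha_k^2(L+\epsilon)^2.
\end{equation*}

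Finally I would telescope from $i=0$ to $k$ (the trick $x_\star=\proj_{\argmin f}(x_i)$ is compatible with telescoping because only the $\dist$-quantities appear on both sides), drop the nonpositive $-\dist(x_{k+1},\argmin f)^2$, use $\dist(x_0,\argmin f)^2\le\|x_0-x_\star\|^2$ for any fixed $x_\star\in\argmin f$, and divide by $\sum_{i=0}^k\alpha_i$ to obtain exactly the stated inequality. The limiting case $a=1$ is handled by skipping Young's step: the error bound directly yields coefficient $2(1-\epsilon c)\ge 1-\epsilon c=2-a-\epsilon c$, which is what the convention $(1-a)(\epsilon c)^{1/(1-a)}=0$ (for $\epsilon c<1$) encodes. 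The only delicate point is checking that the time-varying projection $x_\star$ does not break telescoping, and that Young's inequality is applied with the right split so that the residual coefficient $(1-a)$ matches the statement; once these are pinned down, the remainder is routine.
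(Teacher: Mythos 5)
Your proposal is correct and follows essentially the same route as the paper: the same squared-distance-to-$\argmin$ Lyapunov argument with the projection $x_\star=\proj_{\argmin f}(x_k)$, the same use of the error bound \eqref{eq:errorbound} to absorb the bias term, and Young's inequality $\epsilon c\,X^a\le a X+(1-a)(\epsilon c)^{1/(1-a)}$, which is exactly the content of the paper's auxiliary concavity lemma (\Cref{lem:numericLemma}). The constants and the handling of the $a=1$ case also match.
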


This allows to get convergence rates in value with various choices of step size. For example, if $\epsilon c < 1$, choosing $\alpha_i = \frac{1}{\sqrt{k + 1}}$ for $i = 0, \ldots, k,$ leads to 
\begin{align*}
    \min_{i=1,\ldots,k} f(x_k) - f^* &\leq  \frac{(1-a)(\epsilon c)^{\frac{1}{1-a}}}{2 - a - \epsilon c } +  \frac{\|x_0 - x_*\|^2 + (L+\epsilon)^2}{(2 - a - \epsilon c )\sqrt{k+1}}.
\end{align*}
Note that the bound is vacuous if for example $a = 1$ and $\epsilon \geq 1/c$ and provides effective guarantees only for small values of $\epsilon$. This is somewhat unavoidable: if $f$ is the absolute value and $\epsilon > 1$, one completely looses control of the resulting biased subgradient sequence. This is the so called ``low error'' setting.

\subsection{Reading keys and natural extensions}
\label{section:readingKeys}
For the two first theorems, we examine the continuous-time limit of the recursion \eqref{eq:subgradVanishingStepSize}, which leads to a differential inclusion given in \eqref{eq:biasDI}. A special Lyapunov mechanism is introduced in \Cref{sec:continuousTime}: Along the flow, the objective decreases only in certain regions of the space and may increase in other regions. This mechanism is used to describe the asymptotics of the system and provides explicit estimates in the Lipschitz coercive setting, under a Kurdyka-Lojasiewicz assumption for $f$ and a metric regularity assumption for $\partial^c f$. This leads to an explicit estimate of the distance to $\crit f$ for any positively invariant set $A$ such that $f(A) \subset \vcrite f$. Recall that $A$ is positively invariant for the differential inclusion \eqref{eq:biasDI} if for any $x \in A$ there is a solution to \eqref{eq:biasDI} starting at $x$ whose forward orbit is contained in $A$. The proof of \Cref{th:vanishingStepSizeSemiAlgebraic} and \Cref{th:constantStepSizeSemiAlgebraic} then consists of justifying the fact that relevant asymptotic sets fulfill this invariance property, which is done in \Cref{sec:discreteTimeAnalysis}. The main device is to use the small step limiting relation between the recursion \eqref{eq:subgradVanishingStepSize} and its continuous-time counterpart for which the analysis was made in \Cref{sec:continuousTime}.

Our arguments rely in part on the repulsivity of regular values, the complement of $\vcrite f$, as described in \Cref{sec:descentLemmas}, based on the continuous time analysis in \Cref{lem:lyapunovbiased}. A repulsivity mechanism of the same nature was instrumental in the proof of \cite[Theorem 1, Corollary 1]{josz2023global}, in a setting very close to ours, without errors. It turns out that, in the presence of errors, \Cref{lem:lyapunovbiased} does not provide a classical Lyapunov function, which should be decreasing, non-necessarily strictly, along all trajectories. Therefore, the presence of errors requires dedicated arguments and the repulsivity of regular values is not sufficient to conclude about \Cref{th:constantStepSizeSemiAlgebraic} and \Cref{th:vanishingStepSizeSemiAlgebraic}.

\Cref{th:convexSemiAlgebraic} is obtained by using standard Lyapunov functions in convex optimization and by applying an error bound condition that is automatically satisfied in the semialgebraic case. These arguments are given in \Cref{sec:convex} and are completely independent of the arguments for \Cref{th:vanishingStepSizeSemiAlgebraic} and \Cref{th:constantStepSizeSemiAlgebraic}, which make up the main technical part of this work.

These results could be readily extended in various ways. 
\begin{itemize}
	\item[---] It can be seen directly that the same results apply beyond semialgebraicity to any polynomially bounded o-minimal structure, the prototypical example being the structure of globally subanalytic sets, see \cite{van1998tame} for an overview.
	\item[---] It is also directly possible to obtain qualitatively similar results, without the convergence rate estimates, in any o-minimal structure, replacing the exponent estimates by definable functions \cite{van1998tame}.
	\item[---] While we consider the Clarke subdifferential, the main device used in the proof is the chain rule along Lipschitz curves, which is satisfied for the broader class of conservative gradient fields \cite{bolte2021conservative}. Examples are automatic differentiation oracles and generalized derivatives as in \cite{ermol1998stochastic}. It is known that conservative gradients may induce spurious stationary points. The result in \cite[Theorem 4.12]{bolte2022subgradient} ensures that up to removing a zero measure meagre set of possible initialization $x_0 \in \R^p$ and a finite set of step sizes $\alpha \in \R$, one does actually manipulate the subdifferential oracle.
	\item[---] We only consider the deterministic setting, it is possible to extend these results to the stochastic approximation setting, by adding zero mean stochastic perturbation terms which satisfy summability hypotheses for vanishing step sizes \cite{benaim2005stochastic} or by resorting to the study of convergence of stationary measures in the constant step size setting \cite{roth2013stochastic}.
\end{itemize}

\section{The continuous-time system and auxiliary results}
\label{sec:continuousTime}

Let $f \colon \RR^p \to \RR$ be locally Lipschitz. Consider for $\epsilon > 0$, the differential inclusion
\begin{equation}
    \dot{x}(t) \in -\partialc f(x(t)) + \bar B(0, \epsilon), 
    \label{eq:biasDI}
\end{equation}
for almost every $t \in [0,+\infty)$, where the solution is to be found among locally Lipschitz curves. Existence of solutions on maximal intervals follows from classical results, see \cite{aubin1983differential,Filippov1988DifferentialEW}. In particular, if $f$ is Lipschitz, we may consider solutions defined on $\R_+$. Equivalently, $x$ is solution to \eqref{eq:biasDI} if for almost every $t$, $\dist(\dot{x}(t), -\partialc f(x(t))) \leq \epsilon$.

\subsection{Asymptotics of the biased dynamics}
\label{sec:asymptoticsContinuoutDynamics}
Following Valadier and \cite{bolte2021conservative},  a locally Lipschitz function $f$ is called path-differentiable if for any Lipschitz curve $\gamma \colon \R \to \RR^p$, we have for almost every $t \in \R$,
\begin{align*}
	\frac{d}{dt} (f \circ \gamma)(t) = \left\langle v, \dot{\gamma}(t)\right\rangle,\qquad \forall v \in \partialc f(\gamma(t)).
\end{align*}
Semialgebraic functions are path-differentiable \cite{davis2020stochastic,bolte2021conservative}. Under path-differentia\-bility, we obtain the following results.

\begin{lemma}[Descent properties] \label{lem:lyapunovbiased} Let $f$ be locally Lipschitz and path-differentiable. Let $\epsilon > 0$ and $x : \R_+ \to \R^p$ be a solution to the differential inclusion \eqref{eq:biasDI} assumed to be defined on $\R_+$ (see remark below). Then,
\begin{enumerate}
    \item {\rm (Weak Lyapunov)} There is a measurable selection $v_x$ of $\partialc f(x(\cdot))$ such that,
    \begin{align*}
        f(x(t_2)) - f(x(t_1))&\leq - \int_{t_1}^{t_2} \|v_x(t)\| (\|v_x(t)\| - \epsilon) dt. & \forall  0 \leq t_1 \leq t_2.
    \end{align*}
    In particular, if $f(x(0)) \not \in \vcrite f$, there is $t>0$ such that $f(x(s)) < f(x(0))$ for all $0\leq s \leq t$.
		\item {\rm (Decrease)} 
						Let $l \not \in \cl\vcrite f$ be such that $f(x(0)) \leq l$ then for all $t > 0$, $f(x(t)) < l$. 
  		\item {\rm (Asymptotics)} Either $\|x(t)\| \to + \infty$ as $t\to+\infty$, or we have both \begin{align*}
		&\liminf_{t \to \infty} \dist(0,\partialc f(x(t))) \leq \epsilon\\
		& \lim_{t \to \infty} \dist(f(x(t)),\vcrite f) = 0.
\end{align*}
		\item {\rm (Quantitative estimates)} For any $0\leq a < b$ and $\delta > \epsilon$, set $T =  \frac{b- a}{\delta (\delta - \epsilon)}$ and assume that $f(x([0,T])) \subset [a,b]$, then there is $t \in [0,T]$ such that $\dist(0, \partialc f(x(t))) \leq \delta$. 
        \item {\rm ($\epsilon$-stationarity)} For any $a \in \R$ and $T>0$, if $f(x([0,T])) = \{a\}$ then $x([0,T]) \subset \crite f$.

\end{enumerate}
\end{lemma}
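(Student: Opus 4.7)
The backbone is Part 1. The plan is to use a measurable selection theorem to write $\dot{x}(t)=-v_x(t)+w_x(t)$ with $v_x(t)\in\partialc f(x(t))$ and $\|w_x(t)\|\le\epsilon$, both measurable. Path-differentiability then gives $(f\circ x)'(t)=\langle v_x(t),\dot x(t)\rangle$ a.e., and Cauchy--Schwarz yields
\[
(f\circ x)'(t) \;=\; -\|v_x(t)\|^2 + \langle v_x(t),w_x(t)\rangle \;\le\; -\|v_x(t)\|(\|v_x(t)\|-\epsilon)\qquad \text{a.e.};
\]
integration gives the stated inequality. For the ``in particular'' clause, $f(x(0))\notin\vcrite f$ forces $\dist(0,\partialc f(x(0)))>\epsilon$, and lower semicontinuity of $\dist(0,\partialc f(\cdot))$ (from upper semicontinuity of $\partialc f$) combined with continuity of $x$ propagates this strict inequality to a neighborhood of $0$, producing a positive integrand there. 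Part 4 then follows by contradiction: if $\dist(0,\partialc f(x(t)))>\delta$ on all of $[0,T]$, Part 1 gives $\phi(T)-\phi(0)<-\delta(\delta-\epsilon)T=-(b-a)$, incompatible with $\phi([0,T])\subset[a,b]$. Part 5 is a one-liner: constancy of $\phi$ gives $\phi'=0$ a.e., so $\|v_x\|(\|v_x\|-\epsilon)\le 0$ and hence $\|v_x\|\le\epsilon$, yielding $\dist(0,\partialc f(x(t)))\le\epsilon$ a.e.; closedness of $\crite f$ and continuity of $x$ then extend this to every $t\in[0,T]$.

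\textbf{Part 2.} I argue by contradiction with a first-hitting-time argument. Setting $\phi=f\circ x$ and assuming $\phi(t_*)\ge l$ for some $t_*>0$ with $\phi(0)<l$ and $l\notin\cl\vcrite f$, let $s=\inf\{t>0:\phi(t)\ge l\}$; continuity gives $\phi(s)=l$ and $\phi<l$ on $[0,s)$, with $s>0$. Since $l\notin\vcrite f$, $\dist(0,\partialc f(x(s)))>\epsilon$, and lower semicontinuity propagates this bound on some window $[s-\tau,s+\tau]\subset\R_+$; Part 1 applied there makes $\phi$ strictly decreasing on that window, forcing $\phi(s-\tau)>\phi(s)=l$, contradicting $\phi<l$ on $[0,s)$. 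The boundary subcase $\phi(0)=l$ reduces to the interior case by first invoking Part 1's strict descent at $t=0$ to obtain some $t_0>0$ with $\phi<l$ on $(0,t_0]$, then restarting the argument from $t_0$.

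\textbf{Part 3 (the main obstacle).} Assume $\|x(t)\|\not\to\infty$, so some $u_n\to\infty$ has $\|x(u_n)\|$ bounded and $\phi(u_n)$ bounded. The first assertion follows by contradiction: $\liminf\dist(0,\partialc f(x(t)))>\epsilon$ would give $\dist\ge\epsilon+\delta$ on $[T_0,\infty)$, and Part 1 would force $\phi\to-\infty$, contradicting boundedness of $\phi(u_n)$. For the second, let $\ell^*=\liminf\phi$ and $L^*=\limsup\phi$ (possibly $\pm\infty$). Any $l\in(\ell^*,L^*)\setminus\cl\vcrite f$ is reached from below by $\phi$, and Part 2 then forces $\phi<l$ forever after, contradicting $\limsup\phi>l$; hence $(\ell^*,L^*)\subset\cl\vcrite f$. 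In the oscillatory subcase $\ell^*<L^*$, confinement of $\phi$ to a neighborhood of $[\ell^*,L^*]$ together with $\dist(\cdot,\vcrite f)=\dist(\cdot,\cl\vcrite f)$ gives the conclusion. The delicate case, and the main obstacle, is $\phi\to L^*$ finite with $L^*\notin\cl\vcrite f$: extracting $x(u_n)\to x^*$ along a subsequence yields $f(x^*)=L^*$ and hence $\dist(0,\partialc f(x^*))>\epsilon$; by lower semicontinuity the bound $\dist\ge\epsilon+\delta$ holds uniformly on some $\bar B(x^*,r)$, and the local Lipschitz velocity of the trajectory keeps $x(\cdot)$ inside $\bar B(x^*,r)$ throughout each interval $[u_n,u_n+\tau]$ for a uniform $\tau>0$. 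Part 1 then forces a uniform drop $\phi(u_n+\tau)-\phi(u_n)\le -c_0<0$ along that subsequence, contradicting $\phi(u_n)\to L^*$ and $\phi(u_n+\tau)\to L^*$.
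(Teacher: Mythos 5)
Your proof is correct, and while its backbone (Part 1 as the engine, with Parts 4 and 5 as immediate corollaries) coincides with the paper's, the arguments for Part 2 and for the convergent subcase of Part 3 are genuinely different. In Part 1 the paper takes $v_x(t)$ to be the element of $\partialc f(x(t))$ closest to $-\dot{x}(t)$, which is exactly your decomposition $\dot{x}=-v_x+w_x$ with $\|w_x\|\leq\epsilon$; Parts 4 and 5 are then handled as you do (the paper upgrades your strict pointwise bound to a uniform $\delta'>\delta$ via lower semicontinuity and compactness of $[0,T]$, a cosmetic difference). For Part 2, the paper splits into the cases $f(x(0))\notin\cl\vcrite f$ and $f(x(0))\in\cl\vcrite f$ and, in the first, runs a comparatively heavy argument with auxiliary times $t^-,t^+$ to show the value cannot climb back up while staying in $(\cl\vcrite f)^c$; your first-hitting-time argument at the level $l$, combined with local uniform descent at the hitting point, is shorter and only uses $l\notin\vcrite f$, so it actually proves a marginally stronger statement. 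For Part 3, both proofs obtain the first limit by the same contradiction, and both exclude $\epsilon$-regular values from the interior of the accumulation interval via Part 2; but in the case $f(x(t))\to L^*$ the paper applies Part 4 on windows $[t_k,t_k+2]$ to extract points $x(s_k)$ with $\dist(0,\partialc f(x(s_k)))\to\epsilon$ converging to a point of $\crite f$ with value $L^*$, thereby getting the stronger conclusion $L^*\in\vcrite f$, whereas you contradict $L^*\notin\cl\vcrite f$ through a uniform drop $\phi(u_n+\tau)-\phi(u_n)\leq -c_0$ on a ball around an accumulation point $x^*$. Your route only yields $L^*\in\cl\vcrite f$, but since $\dist(\cdot,\vcrite f)=\dist(\cdot,\cl\vcrite f)$ this suffices for the stated limit $\dist(f(x(t)),\vcrite f)\to 0$.
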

\begin{remark}{\rm 
	Actually \Cref{lem:lyapunovbiased} points 1, 2, 4, 5 hold on the interval of definition of the solution which does not need to be $\R_+$.     In our proofs, we will apply \Cref{lem:lyapunovbiased} to solutions defined on an interval $[0,T] \subset \R_+$.
	\label{rem:intervalDef}}
\end{remark}
\begin{proof}

1. Consider a measurable selection $v_x$ such that for almost every $t \geq 0$,

$$v_x(t) \in \argmin _{v \in \partialc f(x(t))} \|\dot{x}(t) + v\|.$$

Such a selection exists, see e.g. \cite[Theorem 18.13]{infinite}. Since $\dot{x}(t) \in - \partialc f(x(t)) + {\bar B(0,\epsilon)}$, then $\|\dot{x}(t) + v_x(t)\| \leq \epsilon$ for almost every $t \geq 0$. Then, by path-differentiability of $f$, we have for almost every $t \geq 0$,

\begin{align}
    \frac{d}{d t} (f \circ x)(t) & = \langle v_x(t), \dot{x}(t) \rangle  \nonumber\\
    & = \langle v_x(t), - v_x(t) + v_x(t) + \dot{x}(t)\rangle \nonumber\\
    & = - \|v_x(t)\|^2 + \langle  v_x(t),  v_x(t) + \dot{x}(t) \rangle \nonumber\\
    & \leq - \|v_x(t)\|(\|v_x(t)\| - \|\dot{x}(t) + v_x(t)\|) \nonumber\\
    & \leq  - \|v_x(t)\|(\|v_x(t)\| - \epsilon). 
		\label{eq:timeDerivativeObjective}
\end{align}

Integrating from $t_1$ to $t_2$ gives the desired inequality. If $f(x(0)) \not \in \vcrite f$, then $x(0) \not \in \crite f$, and since $\crite f$ is closed, there is a compact set $U$ with $x(0) \in \mathrm{int}\ U$ such that $U \cap \crite f = \emptyset$.
We may therefore choose $t > 0$ small enough  such that $x(s) \in U$ for all $s \in [0,t]$ and the result follows.

2. We distinguish two cases.

Case 1. Assume that $f(x(0)) \notin \cl\vcrite f$. Let us show that $f(x(t)) < f(x(0))$ for all $t > 0$. 

Since $(\cl \vcrite f)^c$ is open and $f$ is locally Lipschitz, there exists $t^* > 0$ small enough such that  $f(x(s)) \notin \cl\vcrite f$ for all $s \in [0, t^*]$. In this case, 1 gives us $f(x(s)) \leq  f(x(0)) - \int_{0}^{t^*} \|v_x(t)\| (\|v_x(t)\| - \epsilon\|) dt < f(x(0))$ for all $s \in (0,t^*]$.

Now we show that for all $t \geq t^*$, $f(x(t)) \leq f(x(t^*))$. Assume toward a contradiction that there exists $t \geq  t^*$ such that $f(x(t^*)) < f(x(t))$. Since $f(x(t^*)) \notin \cl\vcrite f$, we may chose $t$ so that $[f(x(t^*)), f(x(t))] \subset (\cl\vcrite f)^c$ by openness of the latter. Now, consider $t^- = \max \{s \ : \ s \in [t^*, t], f(x(s)) \leq f(x(t^*))\}$, and $t^+ = \min \{s \ : \ s \in [t^-, t], f(x(s)) \geq f(x(t)) \}$. By continuity of $f \circ x$, $t^-$ and $t^+$ are well defined with $t^+ \geq t^-$, and they satisfy for all $s \in [t^-, t^+]$, $f(x(s)) \in  [f(x(t^-)), f(x(t^+))] \subset (\cl \vcrite f)^c \subset (\vcrite f)^c$, as well as $f(x(t^-)) = f(x(t^*))$ and $f(x(t^+)) = f(x(t))$. In particular, $f(x(t^+)) > f(x(t^-))$ hence $t^+ > t^-$. Let $v_x$ be given by item 1. Then we have 

\begin{equation*}
    f(x(t^+)) - f(x(t^-)) \leq - \int_{t^-}^{t^+} \|v_x(s)\| (\|v_x(s)\| - \epsilon) ds < 0,
\end{equation*}
where the last inequality comes from $t^+ > t^-$ and $\|v_x(s)\| > \epsilon$ for almost every $s \in [t^-, t^+]$ since $f(x(s)) \notin \vcrite f$. This yields a contradiction. We have shown that for all $t \geq t^*$, $f(x(t)) \leq f(x(t^*))$.

Finally, for $t \in (0, t^*]$, $f(x(t)) < f(x(0))$, and for $t > t^*$, $f(x(t)) \leq f(x(t^*)) < f(x(0))$ hence the desired result under the assumption that $f(x(0)) \allowbreak \not \in \cl\vcrite f$.

Case 2. Now, assume that $f(x(0)) \in \cl\vcrite f$ and $f(x_0) \leq l$ for $l \not\in \cl\vcrite f$. In this case we actually have $f(x(0)) < l$. Since $(\cl\vcrite f)^c$ is open, there is $l' \not \in \cl\vcrite f$ such that $f(x(0)) < l' < l$. Then by continuity of $f \circ x$, either $f(x(t)) < l$ for all $t$ or there exists $t > 0$ such that $f(x(t))  = l'$ and $t$ can be chosen to be minimal since $[f \circ x = l']$ is closed and lower bounded. We have $f(x(s)) \leq l'$ for all $s \leq t$. Then by Case 1 shown previously, we have for all $s \geq t$, $f(x(s)) \leq l'$. Since $l' < l$, we have the desired result.

4. We now prove the fourth item, which does not depend on item 3 but is used to prove item~3. We have $a < b$.  Assume toward a contradiction that $\dist(0,\partial^c f(x(t))) > \delta$ for all $t \in [0,T]$. Since $s \to  \dist(0,\partial^c f(x(s)))$ is lower semi-continuous, there exists $\delta' > \delta$ such that $\dist(0,\partial^c f(x(t))) \geq  \delta'$ for all $t \in [0,T]$. In this case, item 1 gives us $f(x(T)) \leq f(x(0)) - T \delta' (\delta' - \epsilon)$, hence 
\begin{align*}
    f(x(T)) &\leq b - (b - a) \frac{\delta'(\delta' - \epsilon)}{\delta(\delta - \epsilon)} < b - (b - a) = a
\end{align*}
This is a contradiction as we assumed that $f(x(T))\geq a$.

3. Assume that $\|x(t)\|$ does not go to $+ \infty$, this means that the trajectory has accumulation points. So $f(x(t))$ also has finite accumulation values and in particular $f(x(t))$ does not diverge to $- \infty$ or $+\infty$.

--- Using item $1$, we have that $\liminf_{t \to \infty} \dist(0,\partialc f(x(t))) > \epsilon$ implies that $f(x(t)) \to - \infty$ as $t \to \infty$ and we obtain the first limit.

--- Denote by $I$ all the accumulation points of $f(x(t))$ as $t \to \infty$. $I$ is a nonempty  interval by continuity of $f \circ x$ and by  the fact that $f \circ x$ does not go to $+\infty$ or to $- \infty$. We distinguish two cases.

First if $I$ has empty interior, then $I = \{l\}$ and $f(x(t)) \to l$ for some $l \in \R$. 
Since the trajectory $x$ has accumulation points, we may find a sequence $(t_k)_{k \in \N}$ in $\R_+$ such that $t_k \to \infty$ and $x(t_k) \to \bar{x} \in \R^p$ as $k \to \infty$, and we have $f(\bar{x}) = l$. By item 4, for any $\delta > \epsilon$, set $a = l - \delta(\delta - \epsilon)$, $b = l + \delta(\delta - \epsilon)$ we have $f(x(t_k + \R_+)) \in [a,b]$ for $k$ sufficiently large and therefore, there exists $s_k \in [t_k,t_k + 2]$ such that $ \dist(0, \partialc f(x(s_k))) \leq \delta$. This can be repeated for smaller $\delta$ and we may find a sequence, $(s_k)_{k \in \N}$, such that for each $k$, $s_k \in [t_i,t_i + 2]$ for some $i \in \N$ and $\dist(0, \partialc f(x(s_k)) \to \epsilon$. The restricted trajectory  $\{x([t_i,t_{i}+2])\}_{i \in \N}$ remains in a bounded set by local Lischicity of $f$, hence local boundedness of its subdifferential, so up to a subsequence, we may assume that $x(s_k)$ converges to a point $\Tilde{x} \in \crite f$ with $f(\Tilde{x}) = l$. This shows that $l \in \vcrite f$.


Secondly, let us assume that $I$ has nonempty  interior.  Assume toward a contradiction that there exists $l \in \operatorname{int} I \cap (\cl\vcrite f)^c$. In this case, there is $u > 0$ such that $[l-u,l+u] \subset \operatorname{int} I \cap (\cl\vcrite f)^c$ and $t$ such that $f(x(t)) \in [l-u,l]$. By item 2, we have $f(x(s)) < l$ for all $s>t$, but this is contradictory with the fact that $l+u \in \operatorname{int} I$, because this implies that $t \mapsto f(x(t))$ has accumulation values strictly greater than $l+u$. So $I$ is an interval such that $\emptyset \neq \mathrm{int} I \subset \cl\vcrite f$ hence $I \subset \cl\vcrite f$. This means that $\sup_{v \in I} \dist(v,\vcrite f) = 0$, which is the desired result.

5. If $T > 0$ and $f \circ x$ is constant on $[0,T]$, by item 1, we necessarily have  $\dist(0, \partial^c f(x(t))) \leq \epsilon$ for almost every $t \in (0,T)$, and $x([0,T]) \in \crite f$ because $\crite f$ is closed and $x$ is continuous.



\end{proof}

\subsection{Estimates under the nonsmooth KL inequality and a metric subregularity condition}
\label{sec:continuousTimeEstimates}

We will obtain more precise estimates under the following assumption.

\begin{assumption} \label{ass:KL-MR}
 {\rm  $f$ is $L$-Lipschitz, path-differentiable,  the set $\vcrit f$ is nonempty finite, and there exists $\bar{\epsilon} \in (0,1)$ such that for any $0 \leq \epsilon \leq \bar{\epsilon}$, $\vcrite f$ is a finite union of segments.


Furthermore, there exists $c>0$ and $\theta \in [0,1)$, and $\beta > 0$ such that for all $x \in f^{-1}(\vcriteb f)$
	\begin{align}
					\dist(f(x), \vcrit f)^{\theta} &\leq c\, \dist(0, \partialc f(x)) \tag{KL} \label{eq:KL}\\
					\dist(x, \crit f) &\leq c\, \dist(0, \partialc f(x))^{\beta}\tag{MR}. \label{eq:MR}
	\end{align}
}	\label{ass:KLmetricReg}
\end{assumption}
Property \eqref{eq:KL} is some form of the nonsmooth Kurdyka-Lojasiewicz inequality \cite{bolte2007clarke}, while \eqref{eq:MR} is a form of metric sub-regularity of the subdifferential around the critical set, see  \cite[Proposition 3.1]{lee2022MR} for a general result, but also \cite{hesse2013nonconvex,aspelmeier2016local,russell2018quantitative,luke2020necessary} for concrete applications in optimization.

In our context, \Cref{ass:KL-MR} is essential to control trajectories of \eqref{eq:biasDI}. Some previous works on biased algorithms  relied as well on similar conditions. For instance, KL inequality for real analytic functions was used in \cite{doucet2017asymptotic} and metric regularity ($\beta = 1$) appears in \cite{nguyen2023stochastic}. \Cref{ass:KL-MR} is actually satisfied for all semialgebraic functions:

\begin{lemma}[Semialgebraicity  implies regularity]\label{lem:SA}
    If  \Cref{ass:mainSemiAlgebraic} is satisfied, then \Cref{ass:KL-MR} is also satisfied for some $\bar{\epsilon} > 0$. 
\end{lemma}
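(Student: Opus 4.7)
The plan is to verify each requirement of \Cref{ass:KL-MR} in turn, using three pillars of semialgebraic geometry applied to the Clarke subdifferential: path-differentiability of definable Lipschitz functions, the nonsmooth Sard theorem for Clarke critical values, and the Kurdyka--Lojasiewicz / Lojasiewicz inequalities for semialgebraic functions. Path-differentiability is already cited above as a property of semialgebraic functions. Non-emptiness of $\crit f$ follows from coercivity of $f$ (which \Cref{ass:mainSemiAlgebraic} ensures, as noted in the paper) together with lower-boundedness: $f$ attains its global minimum, which is automatically Clarke-critical. Finiteness of $\vcrit f$ is the nonsmooth semialgebraic Sard theorem of Bolte--Daniilidis--Lewis for semialgebraic locally Lipschitz functions.

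I would next analyze the structure of $\vcriteb f$. Semialgebraicity and Lipschitz continuity of $f$ imply that $\mathrm{graph}[\partial^c f]$ is semialgebraic, so $g(x) := \dist(0, \partial^c f(x))$ is a semialgebraic (lower semicontinuous) function, and $\crite f = g^{-1}([0,\epsilon])$ is semialgebraic. For every $\bar{\epsilon}$ at most the $\epsilon$ supplied by \Cref{ass:mainSemiAlgebraic}, the set $\criteb f$ is compact (closed and bounded), hence $\vcriteb f = f(\criteb f)$ is a compact semialgebraic subset of $\R$, i.e.\ a finite union of closed segments (possibly reduced to points). Coercivity of $f$ then guarantees that the preimage $K := f^{-1}(\vcriteb f)$ is compact: $\vcriteb f \subset [\inf f, M]$ for some $M$, so $K \subset f^{-1}([\inf f, M])$ is bounded by coercivity and closed by continuity.

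For \eqref{eq:KL} I would appeal to the global form of the nonsmooth KL inequality on the compact set $K$: combining the local KL inequality of Bolte--Daniilidis--Lewis at each point of the compact set $\crit f \subset K$ with the finiteness of $\vcrit f = \{l_1,\dots,l_N\}$ and a compactness uniformization, one extracts an exponent $\theta = \max_i \theta_i \in [0,1)$ and a constant $c > 0$ such that $\dist(f(x), \vcrit f)^\theta \leq c\, \dist(0, \partial^c f(x))$ for all $x \in K$, after possibly shrinking $\bar\epsilon$ so that on $K$ the distance $\dist(f(x),\vcrit f)$ is realized at a nearby value $l_i$ covered by the corresponding local KL neighborhood. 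For \eqref{eq:MR}, both $h(x) := \dist(x,\crit f)$ and $g$ are semialgebraic on the compact semialgebraic set $K$; since $\{g = 0\} = \crit f \subset \{h = 0\}$, the classical Lojasiewicz inequality for semialgebraic functions on compact sets yields $\beta > 0$ and $c > 0$ with $h(x) \leq c\, g(x)^\beta$ on $K$.

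The main technical obstacle is the uniformization step in the KL argument: one has to glue the local KL inequalities, indexed by the (possibly uncountable) set $\crit f$, into a single bound valid on the entire $K = f^{-1}(\vcriteb f)$, while simultaneously choosing $\bar\epsilon$ small enough that $K$ remains inside a controlled tube around $\crit f$ where the local exponents can be taken as the max of only finitely many representatives (one per critical value). The remaining verifications are direct applications of the standard semialgebraic toolbox.
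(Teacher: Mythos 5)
Your proposal is correct and follows essentially the same route as the paper: nonsmooth semialgebraic Sard for the finiteness of $\vcrit f$, compactness of $\crite f$ and hence of $f^{-1}(\vcriteb f)$ via coercivity, the uniformized nonsmooth Kurdyka--\L ojasiewicz inequality on that compact set for \eqref{eq:KL}, and a H\"older-type comparison for \eqref{eq:MR}. The only difference is that where you derive \eqref{eq:MR} directly from the \L ojasiewicz inequality between the semialgebraic functions $\dist(\cdot,\crit f)$ and $\dist(0,\partialc f(\cdot))$ on the compact set $f^{-1}(\vcriteb f)$ (noting that the latter function is only lower semicontinuous, so one should invoke the definable rather than the continuous version of the inequality), the paper simply cites an off-the-shelf H\"older metric subregularity result.
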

\begin{proof}
  According to Sard's theorem for semialgebraic functions \cite{bolte2007clarke}, $\vcrit f$ is finite. \Cref{ass:mainSemiAlgebraic} ensures that there exists $\bar{\epsilon}$ such that $\crite f$ is compact for every $0 \leq \epsilon \leq   \bar{\epsilon}$. For $0 \leq \epsilon \leq \bar{\epsilon}$ the sets $\vcrite f \subset \R$ are then compact and semialgebraic, i.e. they consist of a finite number of segments. 
    Furthermore, by \Cref{lem:coercive} (see next subsection), $f$ is coercive so $f^{-1}(\vcriteb f)$ is compact. 
    \eqref{eq:KL} follows from Kurdyka-Lojasiewicz inequality for nonsmooth semialgebraic functions \cite{bolte2007loja} and compactness. As for \eqref{eq:MR}, this is H\"older metric subregularity as given in \cite[Proposition 3.1]{lee2022MR} on the compact set $f^{-1}(\vcriteb f)$.
    
\end{proof}

We will therefore prove \Cref{th:vanishingStepSizeSemiAlgebraic} and \Cref{th:constantStepSizeSemiAlgebraic} under \Cref{ass:KLmetricReg}.

\begin{remark}[Beyond semialgebraicity ] {\rm 
        Semialgebrai\-ci\-ty in \Cref{ass:mainSemiAlgebraic} can be replaced by global subanalyticity and all results would hold true in the exact same form. This allows to include the logarithm and exponential function restricted to compact segments for example. More generally, our results hold provided that $f$ is definable in a polynomially bounded o-minimal structure \cite{coste2000introduction}, for which semialgebraic sets and globally subanalytic sets are the main examples. Our results could also be extended if in \Cref{ass:mainSemiAlgebraic} we assume that $f$ is definable in an o-minimal structure (not necessarily polynomially bounded) instead of being semialgebraic. Under this assumption, we would obtain the same results in  \Cref{th:vanishingStepSizeSemiAlgebraic} and \Cref{th:constantStepSizeSemiAlgebraic}, but the term of the form $C \epsilon^\rho$ would be replaced by a term of the form $e(\epsilon)$ for an abstract nonnegative increasing definable functions $e \colon \RR^+ \to \RR^+$ continuous at $0$ with value 0.
	\label{rem:semiAlgebraic}}
\end{remark}

The next lemma generalizes the following fact,  ``for a locally Lipschitz continuous semialgebraic $f$ and a subgradient curve $x$ ($\dot{x}(t) \in -\partialc f(x(t))$ for all $t \in \R_+$), the inclusion $f(x(\R_+)) \subset \vcrit f$, implies $0 \in \partialc f(x(t))$ for all $t \geq 0$''. Indeed, $\vcrit f$ is made of a finite number of singletons, and if there is $t> 0$ such that $\dist(0,\partialc f(x(t))) > 0$, this would hold locally and result in a strict decrease by path-differentiability. We would thus have  a time $t'>0$ such that $f(x(t')) \not \in \vcrit f$. This fact, corresponding to the case when $\epsilon=0$, can be extended to a general $\epsilon>0$. We remind the reader that $f(x) \in \vcrite f$ does not imply that $x \in \crite f$ so that it is not sufficient to control the distance between $\crite f$ and $\crit f$.

\begin{lemma}[Approximate stationarity of  near-critical  curves] \label{lem:approx_stationarity_curves} 
	Under \Cref{ass:KLmetricReg}, set $\rho = \frac{\beta}{\max\{\theta(\beta + 2),1\}}$. There exists $C >0$, such that for any $\epsilon \in [0, \bar{\epsilon}]$ and any solution curve $x : \R_+ \to \R^p$ such that $f(x(\R_+)) \subset \vcrite f$, we have for all $t \geq 0$,
	\begin{equation}
		\dist (x(t), \crit f ) \leq C\epsilon^{ \rho}.
	\end{equation}
	\label{lem:estimateSolutionToCrit}
\end{lemma}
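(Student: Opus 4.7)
The plan is to combine the oscillation bound on $f \circ x$ coming from \eqref{eq:KL} applied at points of $\crite f$ with a KL desingularization of the Lyapunov descent in \Cref{lem:lyapunovbiased}, use \eqref{eq:MR} to convert a small subgradient into a distance bound, and optimize over a threshold.

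First, since $f \circ x$ is continuous and $\vcrite f$ is a finite union of closed segments by \Cref{ass:KL-MR}, connectedness places $f(x(\R_+))$ in a single segment $[a,b]$. Applying \eqref{eq:KL} at any $z \in \crite f$ with $f(z) = y \in \vcrite f$ yields $\dist(y, \vcrit f) \leq (c\epsilon)^{1/\theta} =: r$, so for $\epsilon \leq \bar{\epsilon}$ small enough that the $r$-neighborhoods of the finitely many points in $\vcrit f$ are pairwise disjoint, $[a,b] \subset [v^* - r, v^* + r]$ for a unique $v^* \in \vcrit f$.

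Fix $t_0 \geq 0$ and a threshold $\delta > \epsilon$, and let $t_1 := \inf\{t \geq t_0 : \dist(0, \partialc f(x(t))) \leq \delta\}$; by lower semicontinuity of $\dist(0, \partialc f(\cdot))$ the infimum, when finite, is achieved. Let $v_x$ be the measurable selection from part 1 of \Cref{lem:lyapunovbiased}, so $\|v_x(t)\| \geq \dist(0, \partialc f(x(t)))$. Then $t_1 < \infty$: otherwise $\|v_x\| > \delta > \epsilon$ for all $t \geq t_0$ would give $\frac{d}{dt}(f \circ x) \leq -\delta(\delta - \epsilon)$ and force $f \circ x$ out of $[a,b]$. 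On $[t_0, t_1]$, $f \circ x$ is strictly decreasing and crosses $v^*$ at most once; on each sub-interval of constant sign of $f - v^*$, \eqref{eq:KL} reads $|f - v^*|^{-\theta} \geq 1/(c\|v_x\|)$, and the Lyapunov function $\phi := |f - v^*|^{1-\theta}/(1-\theta)$ satisfies $|\dot\phi| \geq (\|v_x\| - \epsilon)/c \geq (\delta - \epsilon)/c$. Since the total variation of $\phi$ on $[t_0, t_1]$ is bounded by $2 r^{1-\theta}/(1-\theta)$, we deduce $t_1 - t_0 \leq 2c r^{1-\theta}/((1-\theta)(\delta - \epsilon))$. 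Then \eqref{eq:MR} applied at $t_1$ gives $\dist(x(t_1), \crit f) \leq c\delta^\beta$, and combining with $\|x(t_0) - x(t_1)\| \leq (L + \epsilon)(t_1 - t_0)$ via the triangle inequality yields
\begin{equation*}
\dist(x(t_0), \crit f) \;\leq\; \frac{2c(L+\epsilon)\, r^{1-\theta}}{(1-\theta)(\delta - \epsilon)} + c\,\delta^\beta.
\end{equation*}

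Finally, one optimizes over $\delta > \epsilon$: balancing the two terms through $\delta \propto r^{(1-\theta)/(\beta+1)}$ gives a bound of order $\epsilon^{(1-\theta)\beta/(\theta(\beta+1))}$ provided this optimal $\delta$ exceeds $\epsilon$, equivalently $\theta > 1/(\beta+2)$; in the complementary regime the choice $\delta \asymp \epsilon$ lets the second term dominate and yields a bound of order $\epsilon^\beta$. The minimum of the two exponents is exactly $\rho$. The main obstacle is the careful sign handling in the KL desingularization, which I resolve by splitting $[t_0, t_1]$ into at most two sub-intervals of constant sign of $f - v^*$ and bounding the total variation of $\phi$; a secondary subtlety is the distinction between $\|v_x\|$ (which drives the descent) and $\dist(0, \partialc f(x(\cdot)))$ (which is lower semicontinuous and controls the stopping time), related by $\|v_x\| \geq \dist(0, \partialc f(x))$.
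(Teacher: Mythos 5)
Your proof is correct and reaches the stated exponent $\rho$, but it takes a genuinely different route from the paper's in the one step where the two arguments diverge: bounding the time needed to hit $[\dist(0,\partialc f(x(\cdot)))\leq\delta]$. The paper never desingularizes: it bounds the length $b-a$ of the connected component of $\vcrite f$ containing $f(x(\R_+))$ by a pigeonhole over the finitely many critical values inside $[a,b]$ (giving $b-a\leq 2(N+1)(c\epsilon)^{1/\theta}$, valid for every $\epsilon\leq\bar{\epsilon}$ with no separation hypothesis), and then invokes the raw quantitative estimate of \Cref{lem:lyapunovbiased}.4 — descent at rate at least $\delta(\delta-\epsilon)$ while the subgradient stays above $\delta$ — to get a hitting time $O\bigl((b-a)/(\delta(\delta-\epsilon))\bigr)$. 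You instead shrink $\bar{\epsilon}$ so that $[a,b]$ lies in a single well-separated $r$-neighborhood of one critical value $v^*$, identify $\dist(f(x(t)),\vcrit f)=|f(x(t))-v^*|$ there, and run the classical KL--Lyapunov argument with $\phi=|f-v^*|^{1-\theta}/(1-\theta)$, obtaining a hitting time $O\bigl(\epsilon^{(1-\theta)/\theta}/(\delta-\epsilon)\bigr)$. Since $(1-\theta)/\theta=1/\theta-1$ and the paper lower-bounds $\delta(\delta-\epsilon)\geq\epsilon(\delta-\epsilon)$, the two hitting-time bounds agree up to constants, and the remainder of the argument (\eqref{eq:MR} at the hitting time, the $(L+\epsilon)$ speed bound, the triangle inequality, and the optimization over the threshold with the dichotomy at $\theta=1/(2+\beta)$) is the same, so both yield $\rho=\min\{(1-\theta)\beta/(\theta(1+\beta)),\beta\}$. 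What each buys: the pigeonhole works uniformly on $[0,\bar{\epsilon}]$ and only needs the crude descent inequality, at the price of a constant involving the number of critical values; your desingularization gives a cleaner constant and is closer to the standard KL toolbox, at the price of a further smallness condition on $\epsilon$ — which is harmless, since for $\epsilon$ bounded away from $0$ one can inflate $C$ using the uniform bound $\dist(x,\crit f)\leq cL^{\beta}$ furnished by \eqref{eq:MR} on $f^{-1}(\vcriteb f)$. Your handling of the single sign change of $f\circ x - v^*$ via total variation of $\phi$ on two monotonicity intervals is the right fix for the non-Lipschitz singularity of $s\mapsto|s|^{1-\theta}$ at the crossing, and is legitimate because $|\tfrac{d}{dt}(f\circ x)|\geq\delta(\delta-\epsilon)>0$ there.
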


\begin{proof} 
	Fix an arbitrary $\epsilon \leq \bar{\epsilon}$, and $x \colon \R_+ \to \R^p$ an arbitrary solution.
	Since $f(x(\R_+))$ is connected, $f(x(\R_+))$ is contained in a single connected component of $\vcrite f$ of the form $[a,b] \subset \R$. 
	
	For all $z \in \crite f$, $\dist(0, \partialc f(z)) \leq \epsilon$ implies $\dist(f(z), \vcrit f) \leq (c \epsilon)^{\frac{1}{\theta}}$ using \eqref{eq:KL}. Let $N \in \N$ such that $v_0 \leq v_1 \leq \cdots \leq v_{N+1}$ are the ordered critical values in $[a, b]$ to which we added $v_0 = a$ and $v_{N+1} = b$. 
 This defines $N+1$ segments which cover $[a,b]$, one of them has length at least $\frac{b-a}{N+1}$. Consequently, there is an open segment $(u,v)$ such that $v-u = \frac{b-a}{N+1}$ which does not contain any critical value. 
	Therefore, we may choose $z \in \crite f$ such that $f(z) = (u+v) / 2$ and we have 
	\begin{align}
		\dist(f(z), \vcrit f) \geq \frac{v-u}{2} = \frac{b-a}{2(N+1)}.
		\label{eq:pigeonHole}
	\end{align}
	Combining \eqref{eq:pigeonHole} with \eqref{eq:KL}, we obtain
	\begin{equation}
		b - a \leq 2(N+1) (c\epsilon)^{\frac{1}{\theta}} := K_1 \epsilon^{\frac{1}{\theta}}.
	    \label{eq:estimateLevelSlice}
	\end{equation}
	
	We fix an arbitrary $0<\alpha \leq 1$ and  $t \geq 0$. We set $\delta = 2 \epsilon^\alpha > \epsilon$, by point 4. of \Cref{lem:lyapunovbiased} and \eqref{eq:estimateLevelSlice}, there is $t'\geq t$, such that 
	\begin{align}
		\dist(0, \partialc f(x(t'))) &\leq 2\epsilon^\alpha \label{eq:deltaCrit}\\
		t'-t &\leq \frac{b - a}{(2\epsilon^\alpha - \epsilon)2\epsilon^\alpha } \leq \frac{K_1 \epsilon^{\frac{1}{\theta}}}{(2\epsilon^\alpha - \epsilon)2\epsilon^\alpha} \leq \frac{K_1}{2} \epsilon^{\frac{1}{\theta} - 2 \alpha}.
	    \label{eq:timemax}
	\end{align}
	It follows using the inclusion \eqref{eq:biasDI}, the fact that $f$ is $L$-Lipschitz so that its subgradient is bounded by $L$,  and \eqref{eq:timemax} that
	\begin{align}
		\|x(t') - x(t)\|  \leq \int_{t}^{t'} \|\dot{x}(s)\| ds \leq (t'-t) (L+\epsilon)  = O\left(\epsilon^{\frac{1}{\theta} - 2 \alpha} \right)
	    \label{eq:distanceTocritdelta}
	\end{align}
	Finally, using the estimates \eqref{eq:distanceTocritdelta}, \eqref{eq:deltaCrit} and \eqref{eq:MR},
	\begin{align}
	    \label{eq:equationtwotermsalpha}
		\dist(x(t), \crit f) & \leq \|x(t) - x(t')\| + \dist(x(t'), \crit f)  = O\left( \epsilon^{\frac{1}{\theta} - 2 \alpha}\right) + O\left( \epsilon^{\beta\alpha}\right)
	\end{align}
	Since $t \geq 0$ was arbitrary, the estimate \eqref{eq:equationtwotermsalpha} holds for all $t \geq 0$. Furthermore, since $0<\alpha \leq 1$ was arbitrary, we may choose $\alpha$ freely. We distinguish two cases.
	\begin{itemize}
		\item If $\theta(\beta+2) > 1$, then we choose $\alpha = \frac{1}{\theta (2 + \beta)}<1$, and the right-hand side in \eqref{eq:equationtwotermsalpha} is of the form $O\big(\epsilon^{\frac{\beta}{\theta (2+ \beta)}}\big)$.
		\item If $\theta(\beta+2) \leq 1$ then we may choose  $\alpha = 1$. 
			We have that $\epsilon^{\frac{1}{\theta} - 2} \leq \epsilon^{\beta}$
			hence the right-hand side in \eqref{eq:equationtwotermsalpha} is $O\left(\epsilon^{\beta}\right)$.
	\end{itemize}\end{proof}

\begin{remark}[On the initialization]
  {\rm   It may be puzzling not to see a condition on the initialization in \Cref{lem:approx_stationarity_curves}. This is hidden in the condition $f(x(\R_+)) \subset \vcrite f$ which enforces $x$ to start close enough to $\crit f$ so that $f(x(t))$ cannot leave $\vcrite f$ near $t=0$.}
\end{remark}
\begin{remark}[Power functions]
  {\rm  For a power function, $x \mapsto x^a$ on $\RR_+$, for $a > 1$, we have $\theta = 1- \frac{1}{a}$ and $\beta = \frac{1}{a - 1}$. In this case, we have
  \begin{align*}
  		\theta(\beta+2) = \frac{a-1}{a} \left(\frac{1}{a-1} + 2\right) = \frac{1}{a} + 2 \frac{a-1}{a} = 2 - \frac{1}{a} \in (1,2].
  \end{align*}
  Hence we have $\rho > \frac{\beta}{2}$. This corresponds to the estimate obtained by \cite{doucet2017asymptotic} for analytic functions (if $a$ is an integer). Indeed, in the notations of \cite[Proposition 8.2]{doucet2017asymptotic} we have $\theta = 1$ and $\beta = r_Q$ and the resulting estimate of \cite[Theorem 2.1 (iii)]{doucet2017asymptotic} corresponds to $\frac{\beta}{2}$ by combining estimates in \cite[Proposition 8.2]{doucet2017asymptotic} and \cite[Proposition 8.3]{doucet2017asymptotic}. In this univariate setting however, the correct estimate would be simply $\beta$ and we leave the question of the optimality of our estimate in the nonsmooth multivariate case for future research.
  }
\end{remark}

\Cref{lem:estimateSolutionToCrit} has the following direct consequence.
\begin{corollary}[Invariant sets and biased dynamics]
	Under \Cref{ass:KLmetricReg}, let $\epsilon \leq \bar{\epsilon}$, $S \subset \R^p$ be a positively invariant set, that is for any $z \in S$, there is $x \colon \R_+ \to \R^p$, solution to \eqref{eq:biasDI} such that $x(\R_+) \subset S$ and $x(0) = z$. If $f(S) \subset \vcrite f$, then for any $z \in S$, $\dist(z, \crit f) \leq C \epsilon^\rho$, where $C,\rho$ are given by \Cref{lem:estimateSolutionToCrit}. 
	\label{cor:invariantSet}
\end{corollary}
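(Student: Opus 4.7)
The plan is to observe that this corollary is essentially a direct specialization of \Cref{lem:approx_stationarity_curves} to an arbitrary point of $S$, using the positive invariance hypothesis to manufacture the solution curve that the lemma requires.

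More precisely, I would fix an arbitrary $z \in S$. By the positive invariance assumption, there exists a solution $x \colon \R_+ \to \R^p$ of the biased differential inclusion \eqref{eq:biasDI} with $x(0) = z$ and $x(\R_+) \subset S$. Composing with $f$ yields $f(x(\R_+)) \subset f(S) \subset \vcrite f$, which is exactly the standing hypothesis of \Cref{lem:approx_stationarity_curves}. Since $\epsilon \leq \bar{\epsilon}$, that lemma applies and delivers the bound $\dist(x(t), \crit f) \leq C \epsilon^{\rho}$ for every $t \geq 0$, with $C$ and $\rho$ as stated.

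Evaluating this pointwise bound at $t = 0$ gives $\dist(z, \crit f) = \dist(x(0), \crit f) \leq C \epsilon^{\rho}$, and since $z \in S$ was arbitrary, this concludes the proof. There is no real technical obstacle here: the only subtlety is conceptual, namely recognizing that positive invariance is precisely what is needed to upgrade the curve-level statement of \Cref{lem:approx_stationarity_curves} to a set-level statement. In particular, no new estimates or choices of exponents are required, since the exponent $\rho = \min\{(1-\theta)\beta/(\theta(1+\beta)),\beta\}$ is inherited verbatim from the lemma.
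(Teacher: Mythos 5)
Your proof is correct and is precisely the argument the paper intends when it calls the corollary a ``direct consequence'' of \Cref{lem:estimateSolutionToCrit}: positive invariance supplies the curve through $z$ with $f(x(\R_+))\subset\vcrite f$, and evaluating the lemma's bound at $t=0$ gives the claim. Nothing is missing.
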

For the continuous time dynamics in \eqref{eq:biasDI}, \Cref{lem:lyapunovbiased} point 3 ensures that accumulation points of bounded solutions to the differential inclusion \eqref{eq:biasDI} correspond to objective values in $\vcrite f$. Furthermore, it is known that the set of such accumulation points forms an invariant set (see for example \cite[Theorem 3.6, Lemma 3.5]{benaim2005stochastic}). We obtain the following.
\begin{corollary}[Asymptotics of biased dynamics]
	Under \Cref{ass:KLmetricReg}, let $\epsilon \leq \bar{\epsilon}$, and $x \colon \R^+ \to \R$ be a bounded solution to the differential inclusion \eqref{eq:biasDI}. For any $z \in \R^p$, accumulation point of the trajectory, we have $\dist(z, \crit f) \leq C \epsilon^\rho$,  where $C,\rho$ are given by \Cref{lem:estimateSolutionToCrit}.
	\label{cor:continuousTimeLimit}
\end{corollary}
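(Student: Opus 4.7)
The plan is to apply Corollary \ref{cor:invariantSet} to the $\omega$-limit set $L := \bigcap_{s \geq 0} \cl\{x(t) : t \geq s\}$ of accumulation points of the bounded trajectory $x$. Two things must then be verified: that $L$ is positively invariant under the biased differential inclusion \eqref{eq:biasDI}, and that $f(L) \subset \vcrite f$.

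First I would establish the inclusion $f(L) \subset \vcrite f$ as a direct consequence of Lemma \ref{lem:lyapunovbiased} item 3. Since $x$ is bounded, its norm does not diverge, so $\dist(f(x(t)), \vcrite f) \to 0$ as $t \to \infty$. Under Assumption \ref{ass:KLmetricReg}, the set $\vcrite f$ is a finite union of segments, hence closed. Consequently, for any accumulation point $z = \lim_{k} x(t_k)$ with $t_k \to \infty$, continuity of $f$ gives $f(z) = \lim_k f(x(t_k)) \in \vcrite f$.

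The more delicate step is positive invariance of $L$, which is a classical fact for bounded trajectories of Lipschitz differential inclusions with convex, compact-valued right-hand sides. Given $z \in L$, pick $t_k \to \infty$ with $x(t_k) \to z$ and consider the time-shifted curves $y_k(t) = x(t_k + t)$ on a fixed interval $[0, T]$. Since $f$ is $L$-Lipschitz and $\epsilon$ is fixed, the velocities $\dot y_k$ are uniformly bounded by $L + \epsilon$, so the family $(y_k)$ is equi-Lipschitz and, by boundedness of $x$, uniformly bounded. Arzelà--Ascoli together with the standard closure property of solution sets for differential inclusions with upper semicontinuous convex compact-valued right-hand side (see e.g.\ Aubin--Cellina) yield, along a subsequence, uniform convergence $y_k \to y$ on $[0,T]$ to some absolutely continuous curve $y$ that again solves \eqref{eq:biasDI} with $y(0) = z$. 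A diagonal extraction over a sequence $T_n \to \infty$ produces a solution defined on all of $\R_+$. Because $y(t) = \lim_k x(t_k + t)$ and $t_k + t \to \infty$, every $y(t)$ is itself an accumulation point of $x$, so $y(\R_+) \subset L$, proving invariance.

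With both properties in hand, Corollary \ref{cor:invariantSet} applied to $S = L$ directly yields $\dist(z, \crit f) \leq C \epsilon^\rho$ for every $z \in L$, which is the claim. The main technical obstacle is the invariance step, but it reduces to the well-established closure/compactness principle for Lipschitz differential inclusions; everything else follows cleanly from Lemma \ref{lem:lyapunovbiased} and the quantitative bound already derived in Lemma \ref{lem:estimateSolutionToCrit}.
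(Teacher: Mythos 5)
Your proposal is correct and follows essentially the same route as the paper: the paper likewise notes that the $\omega$-limit set of a bounded trajectory is invariant (citing \cite[Theorem 3.6, Lemma 3.5]{benaim2005stochastic}), uses \Cref{lem:lyapunovbiased}.3 to place its $f$-values in $\vcrite f$, and concludes via \Cref{cor:invariantSet}. You merely make the invariance argument (time shifts, Arzel\`a--Ascoli, closure of the solution set) explicit where the paper defers to the reference.
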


\subsection{Coercivity and boundedness of $\epsilon$ critical points}

Due to the importance of boundedness of curves in our results, we need to make a brief detour through coercivity properties and their link with the boundedness of $\crite f$. 
Let us recall first:
\begin{theorem}[Ekeland's variational principle] 
    \label{th:ekeland}
    Let $X$ be a complete metric space with distance $d$. Let $f \colon X \mapsto \RR \cup \{+ \infty\}$ be lower semi-continuous, bounded below and finite at least at one point. Fix $\epsilon > 0$ and $x_0 \in X$ such that $f(x_0) \leq \epsilon + \inf_x f(x)$. Then for any $\lambda >0$, there is $y_0 \in X$ such that
\begin{align*}
	f(y_0) \leq f(x_0) \qquad d(x_0,y_0) \leq  \lambda  \qquad  f(x) + \frac{\epsilon}{\lambda}d(x,y_0) > f(y_0),\, \forall  x \neq y_0.
\end{align*}
\end{theorem}
The conclusion tells us that $y_0$ is a strict global minimizer of
$\displaystyle
	g: x\rightarrow f(x) + \frac{\epsilon}{\lambda}d(x,y_0).$

In our framework, $\R^p$ is endowed with the Euclidean distance and  $f$ is locally Lipschitz, so using the sum rule with the Clarke subdifferential yields the following fact:
\begin{equation}\label{ekp}
0\in \partial^c g(y_0)\subset \partial^c f(y_0)+ \bar B\left(0,\frac{\epsilon}{\lambda}\right)
\end{equation}
This has the following consequences:
\begin{lemma}[Boundedness of the $\epsilon$-critical set implies coercivity]
	Assume that $f \colon \RR^p \to \RR$ is locally Lipschitz, then for any $x \in \RR^p$, $a \in (0,1)$, there is $y \in \RR^p$ such that
	\begin{align*}
		\|y\| &\geq \|x\| - \|x\|^a, &
		 \dist(0,\partialc f(y)) \|x\|^a \leq f(x) - \inf f.
	\end{align*}	
	In particular, if $f$ is lower bounded and $\criteb f$ is bounded for some $\bar{\epsilon}>0$, then $f$ is coercive, that is $f(x) \to +\infty$ as $\|x\|\to +\infty$.
	\label{lem:coercive}
\end{lemma}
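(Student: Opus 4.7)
The plan is to apply Ekeland's variational principle directly, using $x$ itself as the approximate minimizer. Assume first $x \neq 0$ and $f(x) > \inf f$ (otherwise both conclusions are trivial by taking $y = x$, using that $0 \in \partial^c f(x)$ when $x$ is a minimizer, and noting that $\|x\|^a = 0$ kills the second inequality when $x=0$). Set $\epsilon := f(x) - \inf f$, so $x$ is an $\epsilon$-minimizer, and set $\lambda := \|x\|^a$. Ekeland's principle then produces $y_0 \in \RR^p$ with $f(y_0) \leq f(x)$, $\|x - y_0\| \leq \|x\|^a$, and $y_0$ a strict global minimizer of $g(z) = f(z) + \tfrac{\epsilon}{\lambda} \|z - y_0\|$. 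The first property yields, via the reverse triangle inequality, $\|y_0\| \geq \|x\| - \|x\|^a$, which is the first claim.

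For the subdifferential bound, I would invoke exactly the relation \eqref{ekp} from the excerpt: since $g$ attains its minimum at $y_0$ and $f$ is locally Lipschitz, $0 \in \partial^c g(y_0) \subset \partial^c f(y_0) + \bar B(0, \epsilon/\lambda)$. This gives $\dist(0, \partial^c f(y_0)) \leq \epsilon/\lambda = (f(x) - \inf f)/\|x\|^a$; multiplying through by $\|x\|^a$ delivers the second inequality with $y = y_0$.

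For the coercivity consequence, I would argue by contradiction. Suppose $f$ is lower bounded and $\criteb f \subset \bar B(0, R)$ for some $R$, but $f$ is not coercive. Then there exists a sequence $(x_n)$ with $\|x_n\| \to \infty$ and $f(x_n) \leq M$ for some constant $M$. Fix any $a \in (0,1)$ and apply the first part to each $x_n$ to obtain $y_n$ with
\[
\|y_n\| \geq \|x_n\| - \|x_n\|^a \longrightarrow \infty, \qquad \dist(0, \partial^c f(y_n)) \leq \frac{M - \inf f}{\|x_n\|^a} \longrightarrow 0.
\]
For $n$ sufficiently large the right-hand side of the second bound is at most $\bar \epsilon$, so $y_n \in \criteb f$. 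But $\|y_n\| \to \infty$ contradicts the boundedness of $\criteb f$.

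The only real subtlety is the bookkeeping for degenerate cases (namely $x = 0$ and $f(x) = \inf f$), which must be dispatched separately before invoking Ekeland so that $\lambda > 0$ and $\epsilon > 0$ are admissible in the principle; everything else reduces to a direct application of \eqref{ekp} and a standard "blow up along a sequence" argument.
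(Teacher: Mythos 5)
Your proof is correct and follows essentially the same route as the paper's: apply Ekeland's variational principle with $\epsilon = f(x)-\inf f$ and $\lambda = \|x\|^a$, use the sum rule \eqref{ekp} to bound $\dist(0,\partial^c f(y))$, and derive coercivity by contradiction along an unbounded sublevel sequence. Your explicit treatment of the degenerate cases $x=0$ and $f(x)=\inf f$ is a minor (and welcome) addition of care that the paper leaves implicit.
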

\begin{proof}
	Fix $x \in \RR^p$, if $f(x) = \inf f$ there is nothing to prove. Choose $\epsilon = f(x) - \inf f$ (assumed finite, otherwise there is nothing to prove) and $\lambda = \|x\|^a$. By Ekeland's variational principle in \Cref{th:ekeland} there is $y \in \RR^p$ such that
	\begin{align*}
		\|x - y \| &\leq \|x\|^a,
	\end{align*}
	hence $\|y\| \geq \|x\| - \|x\|^a$. Moreover using \eqref{ekp},  i.e., $0\in \partial^c f(y)+ \bar B\left(0,\frac{\epsilon}{\|x\|^a}\right)$, we get $\dist(0,\partialc f(y)) \leq \frac{\epsilon }{\|x\|^a}$.  

If $\criteb f$ is bounded for some $\bar{\epsilon}>0$, and $f$ was not coercive, we could choose a sequence $\|x_k\|=k+1$ so that $f(x_k)-\inf f$ is bounded by some $M$, and obtain an unbounded sequence $y_k$ in $\criteb f$ for $k$ large enough.
\end{proof}

\begin{remark}[Coercivity and critical points]{\rm (a) Of course $\crit f$ bounded and nonempty does not imply coercivity, e.g.,  take $s \mapsto (1+s^2)^{-1}$. \\
(b) The fact that ``$\crite f$ bounded (for some $\epsilon>0$) implies $f$ coercive'' is a generalization of the classical result in convex analysis ``$\argmin  f$ bounded implies $f$ coercive when $f$ is convex''. }
\end{remark}

In general, coercivity alone does not imply $\criteb f$ bounded, even for semialgebraic functions, take for instance $s \mapsto \sqrt{|s|}$ on $\R \setminus [-1,1]$. The condition is however satisfied under sufficient growth, as showcased by the following result. Note the proposition below applies for example to objectives of the form of regularized risk $f(x) = \ell(x) + \frac{\lambda}{2} \|x\|^2$, where $\ell$ is bounded from below and semiagebraic, a widespread situation in machine learning.

\begin{proposition} \label{prop:coercive_semialg} Let $f : \R^p \to \R$ be a locally Lipschitz semialgebraic function. Assume there exists $\beta > 0$ such that $\liminf_{\|x\| \to \infty} \frac{f(x)}{\|x\|^\beta} > 0 $. Then 

1. If $\beta = 1$,  there exists $\Bar{\epsilon} > 0$ such that $\criteb f$ is bounded.

2. If $\beta > 1 $,  $\crite f$ is bounded for any $\epsilon > 0$.
\end{proposition}

\begin{proof} 1. We prove the case when $\beta = 1$. Assume toward a contradiction that for any $\epsilon > 0$, $\crite f$ is not bounded. Let $\Tilde{\epsilon} : \R_+ \to \R$ be a semialgebraic function such that $\Tilde{\epsilon}(t) \to 0$ as $t \to \infty$ and $\Tilde{\epsilon}(t) > 0$ for all $t \geq 0$. The set $\{(t, \Tilde{\epsilon}(t), x)  \ : \ \dist(0, \partialc f (x)) \leq \Tilde{\epsilon}(t) \}$ is semialgebraic and unbounded. Hence, by the curve selection Lemma, there exists a $C^1$ semialgebraic path $\gamma : \R_+ \to \R^p$ such that $\dist(0, \partialc f (\gamma(t)) \leq \Tilde{\epsilon}(t)$ for any $t \geq 0$ and $\|\gamma(t)\| \to \infty$ as $t \to \infty$.  By semialgebraicity, we may assume $\dot{\gamma}(t)$ does not vanish by considering only large $t$, and $\dot{\gamma}(t)/ \|\dot{\gamma}(t)\| \to v$ for some unit vector $v \in \R^p$. Up to a strictly increasing time reparameterization, we may assume $\|\dot{\gamma}(t)\| = 1$ at any $t \geq 0$. Note that this time reparameterization does imply that $\dot{\gamma}(t) \to v$ and does not change the fact that $\tilde{\epsilon}(t) \to 0$ as $t \to \infty$.

Set $\lambda :=  \liminf_{\|x\| \to \infty} \frac{f(x)}{\|x\|} > 0$ and $f_1(x) := f(x) - \frac{\lambda}{2}\|x\|$ and $N := \partialc \|\cdot\|$, the subdifferential of the norm. In particular, $f_1$ is coercive and semi-algebraic.

By applying path differentiability of $f_1$ along the $C^1$ curve $\gamma$, and using the sum rule for path differentiable functions \cite[Corollary 4]{bolte2021conservative}, we may write
\begin{align}
\label{eq:intermediateExampleCritBounded}
    f_1(\gamma(T)) - f_1(\gamma(0)) & = \int_0^T \langle \dot{\gamma}(t), - \frac{\lambda}{2} N(\gamma(t)) \rangle dt \nonumber\\
    &  + \int_0^T \langle \dot{\gamma}(t), \partialc f_1(\gamma(t)) + \frac{\lambda}{2} N(\gamma(t))\rangle dt \\
    & = - \frac{\lambda}{2} (\|\gamma(T)\| - \|\gamma(0)\|) + \int_0^T \langle \dot{\gamma}(t), \dist(0,\partialc f(\gamma(t)))\rangle dt \nonumber
\end{align}
Now, let us give some estimates. First, $\|\gamma(T)\| \geq \langle \gamma(T),v\rangle = \int_0^T \langle \dot{\gamma}(t),v\rangle dt$. And since $\dot{\gamma}(t) \to v$ we have $\|\gamma(T)\| \geq aT$ for some $a \in (0,1)$ for large values of $T$. Second, we have $ \int_0^T \langle \dot{\gamma}(t), \dist(0,\partialc f(\gamma(t)))\rangle dt = o(T)$, since its norm is bounded above by $\int_0^T \Tilde{\epsilon}(t) dt = o(T)$ as $\tilde{\epsilon(t)} \to 0$. Thus we deduce from \eqref{eq:intermediateExampleCritBounded} that for $T$ large enough, $f_1(\gamma(T)) = f_1(\gamma(0)) + \frac{\lambda}{2} \|\gamma(0)\| - (aT + b)  + o(T)$. Therefore $f(\gamma(T)) \to - \infty$ which contradicts the coercivity of $f_1$.

2. Let us prove the case where $\beta > 1$ using similar ideas. Assume toward a contradiction that there exists $\Bar{\epsilon} > 0$, such that $\criteb f$ is not bounded. We have a semialgebraic path $\gamma$ such that $\|\gamma(t)\| \to \infty$ and $\dist(0, \partialc f(\gamma(t))) \leq \Bar{\epsilon}$ for all $t \geq 0$. We then rely on arguments that are analogous to the proof of 1: $\gamma$ is chosen with unit speed, $\dot{\gamma}(t)/ \|\dot{\gamma}(t)\|$ converges, and we set $\lambda :=  \liminf_{\|x\| \to \infty} \frac{f(x)}{\|x\|^{\beta}}$, $f_\beta(x) := f(x) - \frac{\lambda}{2}\|x\|^{\beta}$ and $N := \partialc \|\cdot\|^{\beta}$. In particular, $f_\beta$ is coercive.
Applying similar arguments as in \eqref{eq:intermediateExampleCritBounded}, there exists $c > 0$ such that for large values of $T > 0$
\begin{equation*}
    f_\beta(\gamma(T)) \leq  f_\beta(\gamma(0)) - \frac{\lambda}{2} (\|\gamma(t)\|^{\beta} - \|\gamma(0)\|^{\beta}) + T \Bar{\epsilon} = -c T^\beta + O(T)
\end{equation*}
where we used that $\frac{\lambda}{2}\|\gamma(T)\|^{\beta} \geq cT^{\beta}$ for some $c > 0$. Again we have $f_\beta(\gamma(T)) \to -\infty$ as $T \to \infty$ which contradicts its coercivity.
\end{proof}

\section{Main consequences for the biased subgradient method}
\label{sec:discreteTimeAnalysis}

\subsection{Link between discrete and continuous-time}

In this section, we repeatedly use the connection between the discrete and continuous-time systems in the limit of small step sizes. This is a classical approach which we state  for a general set-valued map, see \cite[Lemma 21]{bolte2022long}. We use the following assumptions which guarantee the existence of solutions to the differential inclusion \cite[Chapter 2, Section 1, Theorem 3]{aubin1983differential}.
\begin{assumption}
	$Z$ is a set-valued map defined from $\R^p$ into the subsets of $\R^p$; it is nonempty convex valued and locally bounded with closed graph.
	\label{ass:setValuedMap}
\end{assumption}

\begin{lemma}[Approximate solutions to differential inclusions]
	\label{lem:discreteContinuous}
	Let $Z$ be as in \Cref{ass:setValuedMap}. For each $i\in \N$, let $T_i>0$ and assume that $T_i\to T$ as $i \to \infty$ for some $T>0$. For each $i\in\N$, let $\gamma_i\colon [0,T_i] \to \R^p$ be a Lipschitz curve. Assume that the sequence $(\gamma_i)_{i \in \N}$ converges to some bounded and Lipschitz curve $\gamma\colon[0,T]\to \R$, in the sense that $\sup_{t\in[0,\min(T_i,T)]}\|\gamma(t)-\gamma_i(t)\|\to 0$, and
 	\begin{equation}
		\label{eq:closetoclarke}
            \lim_{i\to \infty}\int_0^{T_i}\dist((\gamma_i(t),\dot{\gamma}_i(t)),\operatorname{graph}[Z])\,dt=0.
	\end{equation}
 Then $\dot{\gamma}(t)\in Z(\gamma(t))$ for almost every $t\in[0,T]$.
\end{lemma}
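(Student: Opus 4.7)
The plan is to reduce this to a classical ``weak $L^2$-limit plus upper hemicontinuity'' argument. First I would use the uniform convergence $\gamma_i\to\gamma$ on $[0,\min(T_i,T)]$ together with the boundedness of $\gamma$ to place the images $\gamma_i([0,\min(T_i,T)])$ in a common compact set $K\subset\R^p$ for all $i$ large enough, then apply local boundedness of $Z$ to obtain an open neighborhood $U$ of $K$ and a constant $M>0$ with $\|Z(x)\|\leq M$ for every $x\in U$. A standard fact gives, in addition, that $Z$ is upper hemicontinuous on $U$ (closed graph plus local boundedness).

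The key device is a measurable selection: since $\operatorname{graph}[Z]$ is closed and $t\mapsto\dist((\gamma_i(t),\dot{\gamma}_i(t)),\operatorname{graph}[Z])$ is measurable, a standard approximate-selection theorem (Kuratowski--Ryll-Nardzewski) produces measurable maps $(\tilde{\gamma}_i,v_i):[0,T_i]\to\operatorname{graph}[Z]$ satisfying $\|(\gamma_i(t),\dot{\gamma}_i(t))-(\tilde{\gamma}_i(t),v_i(t))\|\leq\dist((\gamma_i(t),\dot{\gamma}_i(t)),\operatorname{graph}[Z])+1/i$. The integral hypothesis then gives $\|\tilde{\gamma}_i-\gamma_i\|_{L^1}\to 0$ and $\|v_i-\dot{\gamma}_i\|_{L^1}\to 0$. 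After extracting a subsequence, both convergences hold pointwise a.e.\ on $[0,T]$; coupled with $\gamma_i\to\gamma$ uniformly this forces $\tilde{\gamma}_i(t)\to\gamma(t)$ a.e., so eventually $\tilde{\gamma}_i(t)\in U$ and $\|v_i(t)\|\leq M$. Hence $(v_i)$ is bounded in $L^\infty([0,T],\R^p)$ and in particular in $L^2$.

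Passing to a further subsequence, $v_i\rightharpoonup w$ weakly in $L^2([0,T],\R^p)$. Integrating by parts against test functions in $C_c^\infty((0,T))$ and combining $\gamma_i\to\gamma$ uniformly with $\|v_i-\dot{\gamma}_i\|_{L^1}\to 0$ identifies $w=\dot{\gamma}$. Mazur's lemma then yields convex combinations $w_k=\sum_{j\geq k}\lambda_{k,j}v_j$ converging strongly in $L^2$ to $\dot{\gamma}$, and a further extraction gives pointwise a.e.\ convergence. For each such $t$, $w_k(t)\in\conv\bigcup_{j\geq k}Z(\tilde{\gamma}_j(t))$; by upper hemicontinuity of $Z$ at $\gamma(t)$ together with $\tilde{\gamma}_j(t)\to\gamma(t)$, for every $\eta>0$ and all $j$ large enough $Z(\tilde{\gamma}_j(t))\subset Z(\gamma(t))+\bar B(0,\eta)$, an inclusion preserved under convex hull because $Z(\gamma(t))$ is convex. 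Letting $k\to\infty$ and then $\eta\to 0$, and using that $Z(\gamma(t))$ is closed, yields $\dot{\gamma}(t)\in Z(\gamma(t))$ for a.e.\ $t\in[0,T]$.

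The main obstacle is that the integral assumption controls $\dot{\gamma}_i$ only on average, so one cannot directly bound $\dot{\gamma}_i$ in $L^\infty$ nor appeal to its weak $L^2$-compactness. The trick is to shift attention from $\dot{\gamma}_i$ to the measurable selection $v_i$, which inherits an $L^\infty$-bound from the local boundedness of $Z$; the $L^1$-closeness $\dot{\gamma}_i-v_i\to 0$ then guarantees that $v_i$ and $\dot{\gamma}_i$ share the same weak $L^2$-limit, namely the distributional derivative of the uniform limit $\gamma$.
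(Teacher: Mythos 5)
The paper does not actually prove this lemma — it is imported from the literature (\cite[Lemma 21]{bolte2022long}) — so there is no internal proof to compare against; your argument is the classical Aubin--Cellina-type convergence theorem (measurable near-selection, weak $L^2$ compactness, Mazur's lemma, upper semicontinuity plus convexity), which is the right template. However, there is a genuine gap at the step ``so eventually $\tilde{\gamma}_i(t)\in U$ and $\|v_i(t)\|\leq M$. Hence $(v_i)$ is bounded in $L^\infty$.'' The word ``eventually'' is pointwise in $t$: for each $t$ there is an index $i(t)$ beyond which $\tilde{\gamma}_i(t)\in U$, but this gives no uniform-in-$t$ bound for any fixed large $i$. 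What the hypotheses actually give is only that the exceptional sets $E_i=\{t:\tilde{\gamma}_i(t)\notin U\}$ have measure tending to $0$ (by Chebyshev, since $\dist(\tilde\gamma_i(t),K)$ is bounded below on $E_i$ and $\|\tilde\gamma_i-\gamma_i\|_{L^1}\to 0$). On $E_i$ neither $v_i$ nor $\dot\gamma_i$ is controlled, so $(v_i)$ need not be bounded in $L^2$, nor even uniformly integrable, and the weak compactness step collapses. This is not merely cosmetic: if $Z$ is only \emph{locally} bounded and the $\gamma_i$ are not equi-Lipschitz, one can insert fast excursions of $\gamma_i$ on sets of measure $o(1)$ that carry a fixed amount of displacement while contributing $o(1)$ to the integral in \eqref{eq:closetoclarke} (take, e.g., $p=1$, $Z(y)=[-(1+y^2),\,1+y^2]$, $\gamma(t)=2t$ near $0$, and $\gamma_i$ alternating slope $1$ with slope $i^2$ on intervals of length $i^{-3}$); the limit $\gamma$ then fails to be a solution. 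So the lemma, at the level of generality you work at, cannot be established by repairing the wording alone.

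The fix is the extra regularity that is present in every application in the paper: the $\gamma_i$ are affine interpolants of recursions $x_{k+1}-x_k\in\alpha_k Z(x_k)$ with iterates in a fixed compact set on which $Z$ is bounded (\Cref{rk:aff}), hence the family $(\gamma_i)$ is \emph{equi-Lipschitz}, say with constant $\Lambda$. Then $\|\dot\gamma_i\|_{L^\infty}\leq\Lambda$, and your near-selection automatically satisfies $\|v_i(t)\|\leq \|\dot\gamma_i(t)\|+\dist\bigl((\gamma_i(t),\dot\gamma_i(t)),\operatorname{graph}[Z]\bigr)+1/i\leq \Lambda + C$ pointwise (the distance is bounded by the distance to any fixed point of the graph). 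With this uniform $L^\infty$ bound in hand, the remainder of your argument — identification of the weak $L^2$ limit of $v_i$ with $\dot\gamma$ via $\|v_i-\dot\gamma_i\|_{L^1}\to0$ and integration by parts, Mazur's lemma, the inclusion $w_k(t)\in\conv\bigcup_{j\geq k}Z(\tilde\gamma_j(t))$, and the passage to the limit using upper semicontinuity (closed graph plus local boundedness), convexity and closedness of $Z(\gamma(t))$ — is correct. You should therefore either add equi-Lipschitzness of $(\gamma_i)$ as a hypothesis, or state explicitly that you prove the lemma in that restricted setting, which is the only one in which it is invoked.
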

The following is classical and can be found for example in \cite{bolte2022long}. We will use it extensively to obtain discrete descent lemmas in the coming section.
\begin{lemma}[Affine interpolants of discrete dynamics and their limit curves]
    \label{rk:aff}
    Consider $T > 0$ and  sequences satisfying
    \begin{align*}
	x_{k+1} - x_k &\in \alpha_k    Z(x_k),& k =0,1,\ldots, K,
    \end{align*}
    where $K$ is such that $\sum_{k=0}^{K-1} \alpha_k \geq T$ and $0 <\alpha_k \leq \alpha$, $k = 0,\ldots,K-1$. Let $\gamma_{\alpha} \colon [0,T] \to \R^p$ be the interpolation of $(x_k)_{k = 0, \ldots, K}$ defined as follows: for all $ k =0,1,\ldots, K$, $\gamma_\alpha\left(\sum_{i=0}^{k-1} \alpha_k \right)= x_{k}$, and $\gamma_\alpha$ is affine between these points. The curve $\gamma_\alpha$ satisfies 
    \begin{align*}	 
        \int_0^{T}\dist((\gamma_\alpha(t),\dot{\gamma}_\alpha(t)),\operatorname{graph}[Z])\,dt &\leq \alpha T \sup_{0 \leq k\leq K-1}\|Z(x_k)\|.
    \end{align*}
    In particular, if $Z$ is bounded and we have a uniformly bounded family of such curves $\gamma_\alpha$, up to a subsequence, as $\alpha \to 0$, the curves converge uniformly to a solution of the underlying differential inclusion.
\end{lemma}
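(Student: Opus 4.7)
The plan is to exploit the fact that on each sub-interval where $\gamma_\alpha$ is affine, freezing the base point produces a point of $\operatorname{graph}[Z]$ lying very close to $(\gamma_\alpha(t),\dot\gamma_\alpha(t))$; integrating this pointwise estimate then yields the first claim, and the second follows by Arzelà--Ascoli combined with \Cref{lem:discreteContinuous}.

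More precisely, I would set $t_k := \sum_{i=0}^{k-1}\alpha_i$ so that $\gamma_\alpha(t_k)=x_k$ and, for $t$ in the open interval $(t_k,t_{k+1})$, the slope $\dot\gamma_\alpha(t)=(x_{k+1}-x_k)/\alpha_k$ is constant and lies in $Z(x_k)$ by the defining recursion. In particular $(x_k,\dot\gamma_\alpha(t))\in\operatorname{graph}[Z]$, so the product-space distance from $(\gamma_\alpha(t),\dot\gamma_\alpha(t))$ to this point equals
\[
\|\gamma_\alpha(t)-x_k\|=(t-t_k)\|\dot\gamma_\alpha(t)\|\leq (t-t_k)\|Z(x_k)\|.
\]
Writing $\tilde\alpha_k:=\min(t_{k+1},T)-t_k$ for the portion of the $k$-th step lying in $[0,T]$, one has $\sum_k\tilde\alpha_k=T$ and $\tilde\alpha_k\leq\alpha$, so integrating the pointwise bound gives
\[
\int_0^T\dist\!\left((\gamma_\alpha(t),\dot\gamma_\alpha(t)),\operatorname{graph}[Z]\right)dt\;\leq\;\sum_k\frac{\tilde\alpha_k^2}{2}\|Z(x_k)\|\;\leq\;\frac{\alpha T}{2}\sup_{0\leq k\leq K-1}\|Z(x_k)\|,
\]
which is in fact slightly stronger than the stated estimate.

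For the second part, I would combine three observations. When $Z$ is bounded, each interpolant $\gamma_\alpha$ is Lipschitz with constant $\sup\|Z\|$; a uniformly bounded, equi-Lipschitz family of curves thus admits a uniformly convergent subsequence as $\alpha\to 0$ by Arzelà--Ascoli, with limit $\gamma$ Lipschitz on $[0,T]$. The displayed estimate forces $\int_0^T\dist((\gamma_\alpha,\dot\gamma_\alpha),\operatorname{graph}[Z])\,dt\to 0$, so applying \Cref{lem:discreteContinuous} along this subsequence (with $T_i=T$) yields $\dot\gamma(t)\in Z(\gamma(t))$ for almost every $t\in[0,T]$.

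I do not expect any genuine obstacle. The only point requiring care is the truncation of the last step at $T$ (since $\sum_k\alpha_k$ may exceed $T$ by up to $\alpha$), ensuring the identity $\sum_k\tilde\alpha_k=T$ on which the clean $\alpha T$ bound rests; once this bookkeeping is handled, the two ingredients---freezing the base point, and Arzelà--Ascoli followed by \Cref{lem:discreteContinuous}---do all the work.
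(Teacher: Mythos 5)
Your proof is correct and follows essentially the same route as the paper: freeze the base point $x_k$ on each affine piece so that $(x_k,\dot\gamma_\alpha(t))\in\operatorname{graph}[Z]$, bound the distance by $\|\gamma_\alpha(t)-x_k\|$, integrate, and then conclude via Arzelà--Ascoli and \Cref{lem:discreteContinuous}. Your only refinement is to keep the factor $(t-t_k)$ inside the integral rather than bounding it by $\alpha$ pointwise, which yields the marginally sharper constant $\alpha T/2$; the paper uses the cruder uniform pointwise bound $\alpha\sup_k\|Z(x_k)\|$.
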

\begin{proof}
    For all $\alpha > 0$, the curve $\gamma_\alpha$ satisfies for any $t \in [0,T]$, 
\begin{align*}
	\dist((\gamma_\alpha(t),\dot{\gamma}_\alpha(t)),\operatorname{graph}[Z]) &\leq \alpha \sup_{0 \leq k\leq  K-1}\|Z(x_k)\|,
\end{align*}
as $\dot{\gamma_\alpha}(t) = Z(x_k)$ where $k$ is the closest point $x_k$ on the curve corresponding to time smaller than $t$.

When $Z$ is bounded by a constant $L$, one has a uniform bound since $\max_{0,\leq k\leq K}\|Z(x_k)\|\leq L.$ In that case, the curves $\gamma_\alpha$ are $L$-Lipschitz continuous and thus equicontinuous.\\
Whence, by letting $\alpha \to 0$, if the curves are bounded, Arzelà-Ascoli theorem applies and provides a subsequence of $\gamma_\alpha$ that uniformly converges to solutions of the continuous-time differential inclusion, thanks to \Cref{lem:discreteContinuous}.
\end{proof}

\subsection{Descent lemmas}
\label{sec:descentLemmas}

We now state some consequences of \Cref{lem:discreteContinuous} which apply to our setting.
\begin{lemma}[Quasi-descent Lemma]
	Let $f$ be locally Lipschitz and path-differentiable, $\epsilon >0$ and $l \not \in \vcrite f$. Then for any  $\eta>0$ and $M > 0$, there is $\bar{\alpha}>0$ such that for any $z \in \R^p$ satisfying $f(z)\leq l$, $\|z \| \leq M$, and for any sequence generated by \eqref{eq:subgradVanishingStepSize}, with $x_0 = z$ and $\alpha_k \leq \bar{\alpha}$, we have $f(x_k) \leq l+\eta$ for all $k < \inf\{i\in \N:\|x_i\| > M\}$. 
	
	In particular, if $f$ is coercive, there is $M> 0$, such that for $\alpha$ small enough, all sequences generated by \eqref{eq:subgradVanishingStepSize} initialized with $f(x_0) \leq l$ are bounded by $M$.
	\label{lem:descentSequence}
\end{lemma}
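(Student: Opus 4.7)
The plan is to argue by contradiction and compare to a continuous-time limit. If the first statement failed for some fixed $\eta, M > 0$, we would have sequences of step sizes $\alpha_j \to 0$, initial points $z_j$ with $\|z_j\| \leq M$ and $f(z_j) \leq l$, and sequences $(x_i^{(j)})_{i \in \N}$ generated by \eqref{eq:subgradVanishingStepSize} with step sizes $\leq \alpha_j$, admitting some index $k_j < \tau_j := \inf\{i : \|x_i^{(j)}\| > M\}$ with $f(x_{k_j}^{(j)}) > l + \eta$. Local Lipschitzness of $f$ provides a constant $L_M$ bounding both $f$ and the biased oracle $v_\epsilon$ on $\bar B(0, M)$; since $k_j < \tau_j$ all iterates $x_0^{(j)}, \ldots, x_{k_j}^{(j)}$ lie in this ball, and consecutive iterates differ in position and in $f$-value by $O(\alpha_j)$.

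The key construction is to isolate the last time the value lies below $l$: set $s_j := \max\{i \leq k_j - 1 : f(x_i^{(j)}) \leq l\}$, well-defined since $i = 0$ qualifies. The one-step bound gives (i) $f(x_{s_j}^{(j)}) \to l$ (because $f(x_{s_j + 1}^{(j)}) > l$ and the jump is $O(\alpha_j)$) and (ii) the escape time $\tilde T_j := \sum_{i = s_j}^{k_j - 1} \alpha_i^{(j)}$ satisfies $\tilde T_j \geq \eta / (L_M(L_M + \epsilon)) =: 2T_0 > 0$, since the $f$-increase from $x_{s_j}^{(j)}$ to $x_{k_j}^{(j)}$ exceeds $\eta$ yet is at most $L_M(L_M + \epsilon)\tilde T_j$. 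The affine interpolant $\tilde \gamma_j$ of the block $(x_{s_j}^{(j)}, \ldots, x_{k_j}^{(j)})$ from \Cref{rk:aff} is $(L_M + \epsilon)$-Lipschitz and stays in a fixed compact ball, so Arzelà--Ascoli combined with \Cref{lem:discreteContinuous} and \Cref{rk:aff} extracts a subsequential uniform limit $\tilde \gamma \colon [0, T_0] \to \R^p$ solving \eqref{eq:biasDI}. One checks $f(\tilde \gamma(0)) = l$ (from (i)) and $f(\tilde \gamma(t)) \geq l$ on $(0, T_0]$: for each such $t$ and $j$ large, $\tilde \gamma_j(t)$ lies on a segment between two iterates of indices $\geq s_j + 1$ whose $f$-values both exceed $l$, and $L_M$-Lipschitzness along that $O(\alpha_j)$-length segment gives $f(\tilde \gamma_j(t)) \geq l - O(\alpha_j)$, a bound which survives in the limit. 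Since $l \notin \vcrite f$ forces $\tilde \gamma(0) \notin \crite f$, item~1 of \Cref{lem:lyapunovbiased} furnishes $t^\star \in (0, T_0]$ with $f(\tilde \gamma(t^\star)) < l$, contradicting the lower bound.

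For the ``in particular'' part, \Cref{lem:coercive} delivers coercivity of $f$ under \Cref{ass:mainSemiAlgebraic}, so $\{f \leq l + 1\} \subset \bar B(0, M - 1)$ for some $M > 0$. Applying the first assertion with this $M$ and $\eta = 1/2$, then shrinking $\bar \alpha$ so that $L_M (L_M + \epsilon)\bar\alpha \leq 1/2$, a finite $\tau$ would yield $f(x_\tau) \leq f(x_{\tau - 1}) + 1/2 \leq l + 1$, whence $\|x_\tau\| \leq M - 1 < M$, contradicting $\|x_\tau\| > M$; hence $\tau = +\infty$. The main difficulty is the limit passage --- ensuring that $\tilde \gamma$ inherits $f(\tilde \gamma(\cdot)) \geq l$ on a \emph{nondegenerate} right-neighborhood of $0$ --- which is precisely why one must pick $s_j$ as the \emph{last} entry time below $l$ and establish the escape-time bound $\tilde T_j \geq 2T_0$ before choosing the interpolation horizon.
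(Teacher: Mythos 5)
Your proposal is correct and follows essentially the same route as the paper's proof: contradiction, isolation of the last index at which the value is below $l$, a lower bound on the elapsed time needed for the value to climb by $\eta$, affine interpolation with Arzel\`a--Ascoli and \Cref{lem:discreteContinuous} to extract a limit solution of \eqref{eq:biasDI} with $f(\tilde\gamma(0))=l\notin\vcrite f$ and $f\circ\tilde\gamma\geq l$, contradicting item~1 of \Cref{lem:lyapunovbiased}; the coercivity argument for the ``in particular'' part also matches. The only cosmetic difference is that the paper runs the induction for the boundedness claim slightly more carefully (introducing a larger radius $M_0$ and a Lipschitz constant on that larger ball to control $f(x_{\tau})$ when $x_\tau$ may exit $\bar B(0,M)$), a detail your version handles implicitly.
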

\begin{proof}
    Toward a contradiction, we assume that there are $\eta > 0$ and $M >0$ such that for any $\alpha > 0$, there exists $z = x_0$ satisfying $f(z) \leq l$ and a sequence $(\alpha_k)_{k \in \N}$ smaller than $\alpha$ such that there exists $K>0$ satisfying $f(x_K) > l + \eta$ and $\|x_k\| \leq M$ for all $k = 0,\ldots, K$. Observe that $(\alpha_k)_{k \in \N}$ depends on $ \alpha$, and denote by $L$ a Lipschitz constant of $f$ on the ball of radius $M$.

    We fix a time horizon $T < \eta / (L^2 + L\epsilon)$ and we may assume that $\alpha < T$.  We may then also find $k \in \N$ such that $f(x_k) \leq l$ and $f(x_i) > l$   for all $i = k+1, \ldots, K$. We have
     \begin{align*}
        l+\eta < f(x_{K}) \leq f(x_k) + L \sum_{j = k}^{K-1} \|x_{j+1} - x_j\| & \leq l +  L \sum_{j = k}^{K-1} \alpha_j (L + \epsilon) \\ & = l + (L^2 + L\epsilon) \sum_{j = k}^{K-1} \alpha_j . 
    \end{align*}
    We deduce that $\sum_{j = k}^{K-1} \alpha_j > T$.
    Let $K'\geq k$ be the smallest value such that $\sum_{j=k}^{K'} \alpha_j \geq T$, since $\alpha < T$ and by the argument above, we have $k < K' \leq K-1$. 
    We may then consider the truncated sequence $(x_i)_{i = k, \ldots, K'+1}$, which is nonempty whenever $\alpha<T$. Consider the affine interpolation of this truncated sequence from $i=k$ to $i=K'+1 \leq K$ and restricted to $[0,T]$ (which is possible because $\sum_{i=k}^{K'} \alpha_k \geq T$) as in \Cref{rk:aff}, call it $x_\alpha$, it is Lipschitz and satisfies:
    \begin{itemize}
        \item $\|x_\alpha(t)\| \leq M$ for all $t \in [0,T]$.
        \item $x_\alpha(0) = x_k$, $f(x_\alpha(0)) \leq l$ and $f(x_\alpha(t)) > l$ for some $t \in [0,\alpha]$.
        \item $l\leq f(x_\alpha(t)) \leq l + \eta + \alpha L(L+\epsilon)$ for all $t \in [\alpha,T]$.
        \item $\dist((x_{\alpha}(t),\dot{x}_\alpha(t)), \mathrm{graph} \left[-\partialc f + \bar{B}(0,\epsilon)\right]) \leq \alpha (L + \epsilon)$ and $\|\dot{x}_\alpha(t)\| \leq L + \epsilon$ for almost every $t$.
    \end{itemize}
    Thus, these trajectories are uniformly bounded and equicontinuous. Applying Arzelà-Ascoli theorem allows to obtain a converging subsequence as $\alpha \to 0$. \Cref{lem:discreteContinuous} ensures that the limit $x : [0,T] \to \R^p$ is a solution to \eqref{eq:biasDI} such that $f(x(0)) = l$ and $f(x([0,T])) \geq l$ which contradicts \Cref{lem:lyapunovbiased}.1 (see \Cref{rem:intervalDef} for its validity for $x$ defined on $[0,T]$).

    The last remark follows because if $f$ is coercive and locally Lipschitz, then it is Lipschitz on the (compact) sublevel set $l + \eta$, say with constant $L$. For a fixed step size threshold $\alpha_0>0$, choose $M_0 = \max\{\|x\| + \alpha_0 (L+\epsilon),\, \mathrm{s.t.}\,  f(x) \leq L + \eta\}$ and denote by $\tilde{L}$ a Lipschitz constant of $f$ on the ball or radius $M_0$ centered at zero. By coercivity, we may choose $M>0$ large enough such that $\inf_{\|x\| \geq M} f(x) > l + \eta + \alpha_0 \tilde{L}(L+\epsilon)$. We apply the lemma for this choice of $M$, reducing the resulting $\bar{\alpha}$ if it is bigger than $\alpha_0$. Fix any admissible sequence $(x_k)_{k\in\N}$ and $k$ such that $f(x_k) \leq l + \eta$. It holds that $\|x_{k+1} - x_k\| \leq \alpha_0 (L+\epsilon)$ and $\|x_k\| + \alpha_0 (L+\epsilon)\leq M_0$ so that both $x_k$ and $x_{k+1}$ are contained in the ball of radius $M_0$. We deduce that $f(x_{k+1}) \leq l+\eta+\alpha_0 \tilde{L}(L+ \epsilon)$ so $\|x_{k+1}\| \leq M$. We deduce that $k+1 < \inf\{i\in \N:\|x_i\| > M\}$ and the main statement of the lemma ensures that, $f(x_{k+1}) \leq l + \eta$. By induction this holds true for all $k \in \N$.
\end{proof}

\begin{lemma}[$\epsilon$-regular values are repulsive]
	Let $f \colon \RR^p \to \RR$ be a locally Lipschitz and path-differentiable function, $\epsilon > 0$, $l \not \in  \vcrite f$, and $\eta$ $:=$ $ \dist(l,\vcrite f) / 16 > 0$. For any $M > 0$, there is $\bar{\alpha}>0$ such that for any sequence generated by \eqref{eq:subgradVanishingStepSize}, with $\alpha_k \leq \bar{\alpha}$ for all $k \in \N$, and $\sum_{k \in \N} \alpha_k = + \infty$, either $\{i \in \N,\, \|x_i\| > M\}$ is nonempty  or $\liminf_{k \to \infty}f(x_k) > l+2\eta$ or $\limsup_{k \to \infty} f(x_k) < l- 2\eta$. 

        In particular under \Cref{ass:mainSemiAlgebraic}, there is $\bar{\alpha}>0$ such that regardless of the initial condition for any sequence generated by \eqref{eq:subgradVanishingStepSize}, with $\alpha_k \leq \bar{\alpha}$, and $\sum_{k \in \N} \alpha_k = + \infty$ for all $k \in \N$, $\liminf_{k \to \infty}f(x_k) > l+2\eta$ or $\limsup_{k \to \infty} f(x_k) \allowbreak < l- 2\eta$.
	\label{lem:descentSequence2}
\end{lemma}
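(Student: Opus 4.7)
My plan is a proof by contradiction that layers the continuous-time Lyapunov machinery of \Cref{lem:lyapunovbiased} on top of the Quasi-descent control already provided by \Cref{lem:descentSequence}. Assume no such $\bar{\alpha}$ satisfies the claim. Then I can extract a sequence $\bar{\alpha}_n \to 0^+$ and, for each $n$, an initialization $x_0^{(n)}$ and step sizes $\alpha_k^{(n)} \in (0,\bar{\alpha}_n]$ with $\sum_k \alpha_k^{(n)} = \infty$ producing iterates $(x_k^{(n)})_k$ that remain in $\bar B(0,M)$ and simultaneously satisfy $\liminf_k f(x_k^{(n)}) \leq l+2\eta$ and $\limsup_k f(x_k^{(n)}) \geq l-2\eta$. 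The strategy is to show that these two bounds actually confine each tail asymptotically in a compact slice around $l$, and then to build a continuous-time limit curve on which a Lyapunov estimate yields a contradiction.

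The first step traps the values in a thin band around $l$. Because $\eta = \dist(l,\vcrite f)/16$, both intervals $[l+2\eta, l+3\eta]$ and $[l-3\eta, l-2\eta]$ are disjoint from $\vcrite f$. The hypothesis $\liminf_k f(x_k^{(n)}) \leq l+2\eta$ yields infinitely many $K$ with $f(x_K^{(n)}) \leq l+5\eta/2$; \Cref{lem:descentSequence} applied at level $l+5\eta/2$ with tolerance $\eta/4$ (valid for $\bar{\alpha}_n$ small) then gives $f(x_k^{(n)}) \leq l+3\eta$ for all $k$ beyond such a $K$. The lower bound is the delicate part and requires a nested contradiction: if $f(x_K^{(n)}) \leq l-2\eta-\delta$ ever held for some $\delta > 0$, then \Cref{lem:descentSequence} at level $l-2\eta-\delta \notin \vcrite f$ with tolerance $\delta/2$ would force $f(x_k^{(n)}) \leq l-2\eta-\delta/2$ for all subsequent $k$, hence $\limsup_k f(x_k^{(n)}) \leq l-2\eta-\delta/2 < l-2\eta$, contradicting the assumed $\limsup \geq l-2\eta$. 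Thus $f(x_k^{(n)}) > l-2\eta-\delta_n$ for $k \geq K_n^*$, with $\delta_n \downarrow 0$ chosen as $\bar{\alpha}_n \downarrow 0$, and the iterates eventually live in the compact set $\mathcal{K} := \bar B(0,M) \cap [l-3\eta \leq f \leq l+3\eta]$.

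Next I pass to the continuous-time limit. Since $[l-3\eta, l+3\eta]$ misses $\vcrite f$, $\mathcal{K}$ misses $\crit f$ (and hence $\crite f$ via the relevant closed-graph/lower semicontinuity of $\dist(0,\partialc f(\cdot))$), and the quantity $\mu := \min_{\mathcal{K}} \dist(0,\partialc f(\cdot))$ is strictly greater than $\epsilon$. Fix $T := 1 + 6\eta/(\mu(\mu-\epsilon))$. Using $\sum_k \alpha_k^{(n)} = \infty$, extend the affine interpolant $\gamma_n$ of $(x_k^{(n)})_{k \geq K_n^*}$ over $[0,T]$ as in \Cref{rk:aff}. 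These curves are $(L+\epsilon)$-Lipschitz and bounded in $\bar B(0,M)$, and their $f$-values lie in a neighbourhood of $[l-3\eta, l+3\eta]$ whose width shrinks with $\bar{\alpha}_n$. Arzelà--Ascoli together with \Cref{lem:discreteContinuous} then produces a uniform subsequential limit $\gamma : [0,T] \to \R^p$ solving \eqref{eq:biasDI} with $\gamma(t) \in \mathcal{K}$ throughout.

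The contradiction now comes from the weak Lyapunov inequality. \Cref{lem:lyapunovbiased} point~1 applied to $\gamma$ with its measurable selection $v_\gamma$, together with $\|v_\gamma(t)\| \geq \mu > \epsilon$ almost everywhere (since $\gamma(t) \in \mathcal{K}$ avoids $\crite f$), yields $\frac{d}{dt}(f \circ \gamma)(t) \leq -\mu(\mu-\epsilon)$ almost everywhere. Integrating over $[0,T]$ gives $f(\gamma(T)) \leq f(\gamma(0)) - T\mu(\mu-\epsilon) < l - 3\eta$, contradicting $\gamma(T) \in \mathcal{K}$. The ``in particular'' statement under \Cref{ass:mainSemiAlgebraic} is handled separately: if $\liminf_k f(x_k) > l+2\eta$ the claim is immediate; otherwise some $f(x_K) \leq l+2\eta+1$, and combining \Cref{lem:coercive} with the second half of \Cref{lem:descentSequence} provides a common radius $M$ (independent of initialization) inside which the tail $(x_k)_{k \geq K}$ lives, so the main statement applies. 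I expect the main obstacle to be precisely the lower half of the slice confinement, since \Cref{lem:descentSequence} is intrinsically one-sided; recovering a lower bound on $f(x_k)$ requires the secondary contradiction argument, executed with careful uniform handling of tolerances as $\bar{\alpha}_n \to 0$.
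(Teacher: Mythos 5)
Your proof is correct, and it rests on exactly the same three ingredients as the paper's: the quasi-descent lemma (\Cref{lem:descentSequence}), the discrete-to-continuous limit (\Cref{rk:aff} together with \Cref{lem:discreteContinuous} via Arzel\`a--Ascoli), and the strict Lyapunov decrease of \Cref{lem:lyapunovbiased}.1 on a compact slab around $l$ where $\dist(0,\partial^c f)$ is uniformly above $\epsilon$. What differs is how the contradiction is assembled. The paper never needs your two-sided confinement: it shows that a bounded oscillating sequence must visit $[l-4\eta,l+4\eta]$ (consecutive increments are $O(\bar\alpha)$), that from there the continuous-time comparison forces a visit below $l-4\eta$, and that \Cref{lem:descentSequence} then traps the values below $l-3\eta$ forever, contradicting $\limsup_{k}f(x_k)\geq l-2\eta$. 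You instead first confine the entire tail in $[l-3\eta,l+3\eta]$ --- the upper half by quasi-descent, the lower half by your nested contradiction (any drop below the band would be permanent and kill the limsup) --- and then refute this confinement with one long descent of the limit curve. Both routes are valid; yours requires the extra lower-confinement step you rightly flag as the delicate point, whereas the paper sidesteps it by letting the forced drop itself, rather than its impossibility, carry the contradiction. Two small points to tighten: in the lower confinement a single fixed $\delta=\eta$ suffices (taking $\delta_n\downarrow 0$ is unnecessary and muddies the uniformity of the threshold $\bar\alpha$), and in the ``in particular'' part the level $l+2\eta+1$ may well lie in $\vcrite f$; use $l+3\eta$, which is guaranteed $\epsilon$-regular since $\dist(l,\vcrite f)=16\eta$.
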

\begin{proof}
    Fix $M > 0$ and denote by $L$ a Lipschitz constant of $f$ on the ball of radius $M$.
    By definition of $\eta$, we have that $[l-8\eta,l+8\eta] \subset (\vcrite f)^c$. Set $\delta := \min \{ \|v\| \ : v \in \partialc f(x),\, x \in \R^p,\, \|x\| \leq M,\,  |f(x)-l|\leq 8\eta \} > \epsilon$ and $T \geq 12\frac{\eta}{\delta(\delta - \epsilon)}$. Any solution $x : [0,T] \to \R^p$ to \eqref{eq:biasDI} bounded by $M$ such that $f(x(0))\leq l+4\eta$ satisfies $f(x(T)) \leq l - 8\eta$ by \Cref{lem:lyapunovbiased}.1. We first construct a thresholds step size $\bar{\alpha}>0$ which satisfies three desirable properties.

	1. There is $\bar{\alpha} > 0$, such that any sequence generated by \eqref{eq:subgradVanishingStepSize}, bounded by $M$, with $f(x_0)\leq l+4\eta$ satisfies $f(x_k) \leq l - 4\eta$ for some $k \in \N$. Indeed, if this was not the case, using \Cref{lem:discreteContinuous} and the fact that $\sum_{k \in \N} \alpha_k = + \infty$, we could construct a solution curve to the differential inclusion $x\colon \RR^+ \to \RR$ satisfying $f(x(0))\leq l+4\eta$ and $f(x(t)) \geq l- 4\eta$ for all $t \geq 0$. In particular $f(x(T)) \geq l - 4\eta$, which contradicts the previous claim stating that $f(x(T)) \leq l - 8\eta$ for any such curve. 
	
	2. We may assume that for any sequence bounded by $M$ such that  $\limsup_{k \to \infty} f(x_k) \geq l - 2\eta$ and $\liminf_{k\to \infty} f(x_k) \leq l+ 2\eta$, $(f(x_k))_{k \in \N}$  takes infinitely many value in $[l-4\eta,l+4\eta]$. Indeed, for all $k$, we have $\|x_k\| \leq M$ and $\|x_{k+1}\| \leq M$, so that 
	$$|f(x_{k+1})-f(x_k)|\leq L \|x_{k+1}-x_k\|\leq \bar \alpha L(L + \epsilon),$$
	 shrinking $\bar \alpha$ if necessary that this gap is less than $\eta$.

	3. We may reduce $\bar{\alpha}$ further, as given by \Cref{lem:descentSequence}, so that any sequence initialized with $f(x_0) \leq l- 4\eta$ and bounded by $M$ satisfies $f(x_k) \leq l - 3\eta$ for all $k \in \N$. The chosen $\bar{\alpha}$ satisfies the required properties.

	Let us summarize the properties of the resulting $\bar{\alpha}$. Note that the properties claimed above can be shifted by initializing a sequence at an arbitrary $K \in \N$ and by considering $k \geq K$. Below, $(x_k)_{k \in \N}$ is any sequence generated by \eqref{eq:subgradVanishingStepSize} with $\alpha_k \leq \bar{\alpha}$ for all $k \in \N$, bounded by $M$, and $K \in \N$ is arbitrary.
	\begin{enumerate}
		\item If  $f(x_K) \leq l+4\eta$ , there is $k \geq K$ such that $f(x_k) \leq l- 4\eta$.
	   \item If $\limsup_{k \to \infty} f(x_k) \geq l - 2\eta$ and $\liminf_{k \to \infty} f(x_k) \leq l+ 2\eta$, then there is $k \in \N$ such that $f(x_k) \in [l-4\eta,l+4\eta]$.
		\item If $f(x_K) \leq l- 4\eta$ then $f(x_k) \leq l - 3\eta$ for all $k \geq K$.
	\end{enumerate}

	For the choice of $\bar{\alpha}$ as above, assume toward a contradiction that there exists a sequence generated by \eqref{eq:subgradVanishingStepSize}, bounded by $M$, with $\alpha_k \leq \bar{\alpha}$ for all $k \in \N$, satisfying $\limsup_{k \to \infty} f(x_k) \geq l - 2\eta$ and $\liminf_{k \to \infty} f(x_k) \leq l+ 2\eta$. There would be $K \in \N$ such that $f(x_K) \in [l-4\eta,l+4\eta]$ by 2 and we deduce by 1 that there is $k \geq K$ such that $f(x_k) \leq l - 4\eta$. This implies by 3 that $\limsup_{k \to \infty} f(x_k) \leq l - 3\eta$ which is contradictory. 

    The last comment follows because by \Cref{lem:descentSequence} reducing $\bar{\alpha}$ if necessary, there is $M> 0$ such that all sequences generated by \eqref{eq:subgradVanishingStepSize} with $f(x_0) \leq l + 3 \eta$ and $\alpha_k \leq \bar{\alpha}$ for all $k \in \N$ are bounded by $M$. If $\liminf_{k \to \infty}f(x_k) \leq l+2\eta$, then there is $K>0$ such that $f(x_K)\leq l + 3 \eta$ and by considering $\tilde{x}_k = x_{k+K}$, bounded by $M$, the result follows.
\end{proof}

Under \Cref{ass:mainSemiAlgebraic}, $f$ is coercive and $\vcrite f$ is closed. Combining the two previous lemma we obtain
\begin{corollary}[Large-horizon descent Lemma]
Under \Cref{ass:mainSemiAlgebraic}, fix $x_0$ such that $f(x_0) \not \in \vcrite f$ and $\eta>0$.
	There is $\bar{\alpha}>0$ such that any sequence generated by \eqref{eq:subgradVanishingStepSize} with $\alpha_k \leq \bar{\alpha}$ and $\sum_{k\in \N} \alpha_k = +\infty$ we have 
	\begin{itemize}
	\item[(i)] $f(x_k) \leq f(x_0)+\eta$ for all $k \in \N$, 
	\item[(ii)] there is $\kappa$  such that 
	$$f(x_k)\leq c+\eta, \:\: \forall k\geq \kappa$$
	where $c$ is the first $\epsilon$-critical value below $f(x_0)$.
	\end{itemize}
\end{corollary}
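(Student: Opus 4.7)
Both items will follow from iteratively combining the quasi-descent in \Cref{lem:descentSequence} with the repulsiveness of $\epsilon$-regular values in \Cref{lem:descentSequence2}. Under \Cref{ass:mainSemiAlgebraic}, $\vcrite f$ is closed (a finite union of segments by \Cref{lem:SA}), and by coercivity of $f$ (\Cref{lem:coercive}) the minimum is attained so $\vcrite f \cap (-\infty, f(x_0)]$ is nonempty. Hence $c := \max(\vcrite f \cap (-\infty, f(x_0)])$ is well defined with $c < f(x_0)$. Letting $c^+ := \inf(\vcrite f \cap (c, +\infty))$ (equal to $+\infty$ if this set is empty), the gap $(c, c^+)$ contains $f(x_0)$ and avoids $\vcrite f$. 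Since a smaller $\eta$ only strengthens the conclusion, I may replace $\eta$ by $\min\{\eta, (f(x_0) - c)/2, c^+ - f(x_0)\}$, ensuring in particular that $(c, f(x_0) + \eta) \cap \vcrite f = \emptyset$.

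For (i), I apply \Cref{lem:descentSequence} with the regular value $l = f(x_0)$ and the reduced $\eta$; its ``in particular'' clause provides $\bar\alpha_0 > 0$ and $M > 0$ such that any admissible sequence from $x_0$ with step sizes $\leq \bar\alpha_0$ remains bounded by $M$ and stays in $\{f \leq f(x_0) + \eta\}$.

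For (ii), I iteratively sharpen an upper bound $v_k$ on $\limsup_n f(x_n)$, starting from $v_0 := f(x_0) + \eta$ (afforded by (i)). Given $v_k > c + \eta$, I choose a regular value $l_k \in (c, v_k)$ close enough to $v_k$ that $l_k + 2\eta_{l_k} > v_k$, where $\eta_{l_k} := \dist(l_k, \vcrite f)/16$; this is feasible because $v_k \notin \vcrite f$ implies $\dist(v_k, \vcrite f) > 0$, so $\eta_{l_k}$ stays bounded away from zero as $l_k \to v_k^-$. Applying the dichotomy of \Cref{lem:descentSequence2} to $l_k$ with bound $M$ produces some $\bar\alpha_k > 0$ such that, for step sizes $\leq \bar\alpha_k$, either $\liminf_n f(x_n) > l_k + 2\eta_{l_k} > v_k \geq \limsup_n f(x_n)$ (impossible) or $\limsup_n f(x_n) < l_k - 2\eta_{l_k} =: v_{k+1}$. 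An explicit choice of $l_k$ (for instance $l_k = v_k - (v_k - c)/10$ in the case $\dist(l_k, \vcrite f) = l_k - c$, or a symmetric choice when $l_k$ is closer to $c^+$) makes $v_k - c$ shrink by a factor uniformly bounded away from $1$, so a finite number $K$ of iterations suffices to reach $v_K < c + \eta$ strictly. Taking $\bar\alpha := \min_{0 \leq k \leq K} \bar\alpha_k$ and setting $\kappa$ to be an index beyond which $f(x_n) < v_K$ gives (ii).

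The main obstacle I foresee is the selection of $l_k$ at each step: it must be close enough to $v_k$ to forbid the ``large $\liminf$'' branch (requiring $l_k + 2\eta_{l_k} > v_k$) yet far enough that $v_{k+1}$ marks definite progress. The analysis bifurcates according to whether the nearest point of $\vcrite f$ to $l_k$ is $c$ or $c^+$, but in each case an explicit choice yields geometric decrease of $v_k - c$, ensuring termination with $K = O(\log((f(x_0) - c)/\eta))$ and $\bar\alpha > 0$.
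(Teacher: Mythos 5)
Your argument is correct and is precisely the combination of \Cref{lem:descentSequence} (for (i) and for boundedness by $M$) and \Cref{lem:descentSequence2} (iterated down the regular gap $(c, f(x_0)+\eta)$, with the high-$\liminf$ branch excluded by the running upper bound $v_k$) that the paper invokes, the corollary being stated there without a written proof. Two harmless slips to fix: reduce $\eta$ \emph{strictly} below $c^+-f(x_0)$ (e.g.\ take half of it) so that $v_0=f(x_0)+\eta\notin\vcrite f$; and when the nearest point of $\vcrite f$ to $l_k$ is $c^+$, the constraint $l_k+2\eta_{l_k}>v_k$ forces $l_k$ close to $v_k$, so the update only decreases $v_k$ additively by at least $(c^+-v_k)/8\geq (c^+-v_0)/8>0$ rather than contracting $v_k-c$ geometrically --- this still yields finite termination after which the geometric regime near $c$ takes over.
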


Note that if $(1+\rho)\eta<f(x_0)-c$ for some $\rho>0$, the above implies 
$$f(x_k)\leq f(x_0)-\rho\eta \mbox{ for $k\geq\kappa$},$$
where $\kappa$ is unknown.

\subsection{Vanishing step sizes}
For vanishing step sizes, the work of Benaim-Hofbauer-Sorin \cite{benaim2005stochastic} ensures that the set of accumulation points is invariant. We additionally show that it has accumulation values in $\vcrite f$. The main result stated in \Cref{th:vanishingStepSizeSemiAlgebraic} is a consequence of the following result and \Cref{lem:SA}.
\begin{theorem}[Convergence for biased subgradient with vanishing step]
	Under \Cref{ass:KLmetricReg}, let $C, \rho$ be given by \Cref{lem:estimateSolutionToCrit} and $\epsilon < \bar{\epsilon}$. Assume that $(x_k)_{k \in \N}$, is given by \eqref{eq:subgradVanishingStepSize} with  $\alpha_k \to 0$ and $\sum_{k =0}^\infty \alpha_k = \infty$.
	 If $\sup_{k \in \N} \|x_k\| < \infty$,  
	\begin{align*}
					\lim_{k \to \infty} \dist (f(x_k), \vcrite f ) &=0,\\
			       \limsup_{k \to \infty} \dist (x_k, \crit f ) &\leq C\epsilon^{\rho}.
	\end{align*}
        The boundedness condition always holds for small enough step sizes if $f$ is coercive.
	\label{th:vanishingStepSize}
\end{theorem}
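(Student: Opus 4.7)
The strategy is a standard ODE-method argument, specialized using the ingredients developed in Sections \ref{sec:continuousTime} and the descent lemmas of Section \ref{sec:discreteTimeAnalysis}. Let $L \subset \R^p$ denote the set of accumulation points of $(x_k)_{k\in\N}$. Since $\sup_k \|x_k\| < \infty$, $L$ is nonempty and compact. The proof reduces to two claims: (i) $L$ is positively invariant for the differential inclusion \eqref{eq:biasDI}, and (ii) $f(L) \subset \vcrite f$. Once both are established, \Cref{cor:invariantSet} immediately yields $\dist(z,\crit f)\leq C\epsilon^\rho$ for every $z\in L$, which gives the second conclusion, and (ii) gives the first.

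\textbf{Step 1: Positive invariance of $L$.} I would use the affine interpolant machinery of \Cref{rk:aff}. Since $(x_k)$ is bounded and $f$ is $L$-Lipschitz on the closure of the trajectory, the oracle values $v_\epsilon(x_k)$ are uniformly bounded by $L + \epsilon$. Fix any $z \in L$ and any horizon $T>0$, and pick a diverging subsequence $(k_j)$ with $x_{k_j}\to z$. Let $\gamma_j$ be the affine interpolant of $(x_{k_j}, x_{k_j+1},\ldots)$ on $[0,T]$, rescaled so that the time-step between consecutive iterates is $\alpha_{k_j+\cdot}$. Because $\alpha_k\to 0$, \Cref{rk:aff} gives
\begin{align*}
\int_0^{T}\dist\bigl((\gamma_j(t),\dot\gamma_j(t)),\,\operatorname{graph}[-\partialc f + \bar B(0,\epsilon)]\bigr)\,dt \;\leq\; \alpha_{k_j}\,T\,(L+\epsilon)\;\longrightarrow\; 0.
\end{align*}
The family $(\gamma_j)$ is uniformly Lipschitz and uniformly bounded, so Arzelà--Ascoli and \Cref{lem:discreteContinuous} yield a limit curve $x\colon[0,T]\to\R^p$ that is a solution of \eqref{eq:biasDI} with $x(0)=z$. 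Because each $\gamma_j(t)$ is a convex combination of two iterates $x_k$ with indices tending to infinity, every value $x(t)$ is itself an accumulation point of $(x_k)$, so $x([0,T])\subset L$. This shows $L$ is positively invariant.

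\textbf{Step 2: Accumulation values lie in $\vcrite f$.} This is where I would invoke \Cref{lem:descentSequence2}. Fix any $l \notin \vcrite f$ and set $\eta = \dist(l,\vcrite f)/16>0$. Since $(x_k)$ is bounded, say by $M$, the lemma (applied with this $M$ and with the tail of the sequence, after reducing to $\alpha_k \leq \bar\alpha$ via $\alpha_k\to 0$) implies that either $\liminf_k f(x_k) > l+2\eta$ or $\limsup_k f(x_k) < l-2\eta$. In either case, $l$ is not an accumulation value of $(f(x_k))$. Since $\vcrite f$ is closed and $(f(x_k))$ is bounded (as $f$ is continuous and $(x_k)$ is bounded), this gives $\dist(f(x_k),\vcrite f)\to 0$, which is the first conclusion. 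By continuity of $f$, it also gives $f(L) \subset \vcrite f$.

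\textbf{Step 3: Quantitative bound and conclusion.} With (i) and (ii) in hand, \Cref{cor:invariantSet} applied to the positively invariant set $L\subset \R^p$ on which $f(L)\subset \vcrite f$ yields $\dist(z,\crit f)\leq C\epsilon^\rho$ for every $z\in L$. Since $L$ is exactly the accumulation set of $(x_k)$ and $\crit f$ is closed, $\limsup_k \dist(x_k,\crit f) = \sup_{z\in L}\dist(z,\crit f) \leq C\epsilon^\rho$. The final remark about coercivity follows from \Cref{lem:descentSequence}: under \Cref{ass:mainSemiAlgebraic}, $f$ is coercive (\Cref{lem:coercive}), so for $\bar\alpha$ small enough any sequence starting at $x_0$ is uniformly bounded in terms of $f(x_0)$. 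The main technical obstacle is Step 1, which relies on the careful interpolation argument above; Step 2 is a direct consequence of the repulsion property of $\epsilon$-regular values, and Step 3 is immediate from the continuous-time estimates of \Cref{sec:continuousTimeEstimates}.
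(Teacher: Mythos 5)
Your proposal follows the paper's proof in its overall architecture — establish $f(L)\subset\vcrite f$ via \Cref{lem:descentSequence2}, establish positive invariance of the accumulation set $L$, then conclude with \Cref{cor:invariantSet} — and Steps 2 and 3 coincide with the paper's argument essentially verbatim. The one genuine difference is Step 1: the paper does not reprove invariance but cites Benaïm--Hofbauer--Sorin (the accumulation set of a vanishing-step recursion is internally chain transitive, hence invariant), whereas you give a direct, self-contained interpolation argument in the spirit of \Cref{rk:aff} and \Cref{lem:discreteContinuous}. This is a legitimate alternative (it mirrors what the paper itself does for constant steps in \Cref{lem:limitingSet}), and it buys self-containedness at the cost of redoing known dynamical-systems work. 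Two small points would need tightening if you pursue your route: the integral bound from \Cref{rk:aff} should involve $\sup_{k\geq k_j}\alpha_k$ rather than $\alpha_{k_j}$ (harmless, since both vanish); more importantly, \Cref{cor:invariantSet} as stated requires a single solution defined on all of $\R_+$ remaining in $L$, while your construction only produces, for each fixed $T$, a solution on $[0,T]$ — you need the diagonal extraction over increasing horizons (exactly as in the last paragraph of the proof of \Cref{lem:limitingSet}) to close this, or else observe that the proof of \Cref{lem:estimateSolutionToCrit} only ever uses the trajectory on a finite horizon depending on $\epsilon$.
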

\begin{proof}
    By assumption, there is $M > 0$ such that for all $k \in \N$, $\|x_k\| \leq M$.
    Denote by $L_f$ the set of limit points of $f(x_k)$, it is an interval since $\alpha_k \to 0$. We first prove the first statement which is equivalent to the fact that $L_f$ does not contain any value $l \not \in \vcrite f$. Suppose toward a contradiction that this is the case, then there exists a value $l \in (\vcrite f)^c \cap L_f$. For such a value $l$, let $\eta > 0$ be given by \Cref{lem:descentSequence2} ($\vcrite f$ is closed by \Cref{ass:KL-MR}). Recall that $(\alpha_k)_{k \in \N}$ is vanishing so that either $\liminf_{k \to \infty} f(x_k) > l + 2\eta$ or $\limsup_{k \to \infty} f(x_k) < l - 2 \eta$, which is in contradiction with $l \in L_f$.

	Denote by $L$ the set of accumulation points of the sequence $(x_k)_{k \in \N}$. By the previous statement we have $f(L) \subset \vcrite f$. By \cite[Theorem 4.2 and 4.3]{benaim2005stochastic}, $L$ is an internally chain transitive set. In particular, $L$ is invariant for the inclusion \eqref{eq:biasDI} by \cite[Lemma 3.5]{benaim2005stochastic}. This means that for any $\bar{x} \in L$, there is a solution $x$ to \eqref{eq:biasDI}, such that $x(0) = \bar{x}$ and $x(\R) \in L$. The second statement follows by \Cref{cor:invariantSet}. 

    The last comment on boundedness follows from \Cref{lem:descentSequence}.
\end{proof}

\subsection{Constant step sizes}
We start this section with a general result on constant step size discretization which is of independent interest.
\begin{lemma}[Limits of accumulation points are invariant]
	Let $M>0$ be a bound and $x_0 \in \R^p$ be fixed.
	Under \Cref{ass:setValuedMap}, for any $s>0$, denote by $L(s)$ the set of accumulation points of sequences satisfying $x_{k+1}(s) = x_k(s) +  s h(x_k(s))$ for each $k \in \N$, where $h$ is a selection in $Z$. Assume that $L(s)$ is nonempty  and bounded by $M$ for all $s$ small enough. Set 
	\begin{align*}
		L = \bigcap_{\alpha > 0} \mathrm{cl} \left\{\bigcup_{0<s\leq \alpha} L(s)\right\}.
	\end{align*}
	Then the set $L$ is nonempty and invariant with respect to the flow induced by $Z$ in the sense that for any $\bar{x} \in L$ there exists a solution to the differential inclusion $\dot{x}(t) \in Z(x(t))$ for almost every $t \in \R$ such that $x(\R) \subset L$ and $x(0) = \bar{x}$.
	\label{lem:limitingSet}
\end{lemma}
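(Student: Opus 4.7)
For $\alpha > 0$ small enough, $L(s) \subset \overline{B}(0, M)$ for every $s \leq \alpha$, so each $C_\alpha := \mathrm{cl}\bigcup_{0 < s \leq \alpha} L(s)$ is a non-empty compact subset of $\R^p$. The family $\{C_\alpha\}_{\alpha > 0}$ is nested and decreasing as $\alpha \downarrow 0$, hence $L = \bigcap_{\alpha > 0} C_\alpha$ is non-empty by the finite intersection property.

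\textbf{Invariance --- outline.} Fix $\bar x \in L$. The plan is to build, for each horizon $T > 0$, a solution $\gamma^T \colon [-T, T] \to \R^p$ of $\dot{\gamma}^T(t) \in Z(\gamma^T(t))$ with $\gamma^T(0) = \bar x$ and $\gamma^T([-T, T]) \subset L$, and then extend to all of $\R$ by a standard diagonal Arzelà-Ascoli extraction along an exhaustion $T_j \uparrow \infty$. By definition of $L$, pick $s_n \downarrow 0$ and $\bar x_n \in L(s_n)$ with $\bar x_n \to \bar x$. Since $\bar x_n$ is an accumulation point of the $s_n$-orbit, for each $n$ I choose an index $k_n \in \N$ with $k_n s_n > T$ and $\|x_{k_n}(s_n) - \bar x_n\| < 1/n$, so that $x_{k_n}(s_n) \to \bar x$.

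\textbf{Construction of $\gamma^T$.} The central technical step is to show that the two-sided iterate window $W_n := \{x_{k_n + \ell}(s_n) : |\ell| \leq \lfloor T/s_n \rfloor\}$ lies in a fixed compact set: a bounded orbit whose accumulation set lies in $\overline{B}(0, M)$ is eventually trapped in any neighborhood of that ball, so for $n$ large the iterates near index $k_n$ hug $L(s_n)$; a bootstrap using local boundedness of $Z$ then propagates this control across the full $2 \lfloor T/s_n \rfloor$ steps. With $W_n$ bounded, I form the two-sided affine interpolant $\gamma_n \colon [-T, T] \to \R^p$ of $(x_{k_n + \ell}(s_n))_\ell$, centered at $\gamma_n(0) = x_{k_n}(s_n)$. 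Applying \Cref{rk:aff} symmetrically in both time directions yields $\int_{-T}^{T} \dist((\gamma_n(t), \dot{\gamma}_n(t)), \operatorname{graph}[Z]) \, dt \to 0$. The curves $\gamma_n$ are equi-Lipschitz and uniformly bounded, so Arzelà-Ascoli extracts a uniformly convergent subsequence to some $\gamma^T$, and \Cref{lem:discreteContinuous} makes $\gamma^T$ a solution of the inclusion, with $\gamma^T(0) = \bar x$.

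\textbf{Values in $L$, and main obstacle.} For $t \in [-T, T]$, writing $\ell_n = \lfloor t / s_n \rfloor$ gives $\gamma^T(t) = \lim_n x_{k_n + \ell_n}(s_n)$. Since $k_n + \ell_n \to \infty$ and the $s_n$-orbit approaches $L(s_n)$, I pick $y_n \in L(s_n)$ with $\|y_n - x_{k_n + \ell_n}(s_n)\| \to 0$, so $y_n \to \gamma^T(t)$. For any fixed $\alpha > 0$, eventually $s_n \leq \alpha$, so $\gamma^T(t) \in C_\alpha$; intersecting over $\alpha > 0$ yields $\gamma^T(t) \in L$. The main obstacle in this plan is precisely the two-sided boundedness of $W_n$: unlike forward iterates, backward iterates are determined by the past of the trajectory and cannot be selected freely, so one must exploit the eventual trapping of the bounded orbit near its bounded accumulation set, together with local boundedness of $Z$, to propagate control uniformly across a window of $\lfloor T/s_n \rfloor$ steps on each side of $k_n$.
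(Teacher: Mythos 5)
Your proposal is correct and follows essentially the same route as the paper: a two-sided window of about $T/s$ iterates around a late index converging to $\bar x$, affine interpolation, the integral estimate of \Cref{rk:aff}, Arzelà-Ascoli, \Cref{lem:discreteContinuous}, and a diagonal extension to all of $\R$. The only real divergence is at the obstacle you flag: rather than invoking trapping of the orbit's tail near $L(s_n)$ and a bootstrap to control the backward iterates, the paper takes a subsequential limit in $k$ of the \emph{entire} window $X_{j,k}=(x_i(s_j))_{i=k-K_j}^{k+K_j}$, so every coordinate of the limiting window is itself an accumulation point and hence lies in $L(s_j)\subset \bar B(0,M)$, which delivers both the boundedness and the inclusion $\bar\gamma([-T,T])\subset L$ in one stroke --- in your version the same effect requires choosing $k_n$ beyond an $s_n$-dependent trapping index so that all $2\lfloor T/s_n\rfloor+1$ iterates of the window (not just the central one) are within $1/n$ of $L(s_n)$.
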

\begin{proof}
	The assumption that $L(s)$ is bounded by a fixed $M$ for small enough $s$ ensures that all considered sequences are bounded and we may restrict all the asymptotics to happen in a fixed bounded set (for example of size $2M$), in particular, $Z$ will be bounded by $\zeta$ on this set. This ensures that $L$ is nonempty and this that all considered objects remain in a bounded set. 

    Let $\bar{x} \in L$. For a fixed $\alpha > 0$, $ \bar{x} \in \mathrm{cl} \cup_{s \leq \alpha} L(s)$ means that for any $e > 0$, there is $s \leq \alpha$ such that $\dist(\bar{x}, L(s)) \leq e$. Therefore if $\bar{x} \in L$, we deduce that there exists a sequence $(s_j)_{j\in\N}$ which tends to $0$ and a sequence $\bar{x}_j \in L(s_j)$ which tends to $\bar{x}$ as $j \to \infty$.
    
	Fix an arbitrary $T > 0$ and $j \in \N$. We set $K_j = \left\lceil T / s_j \right\rceil$ and consider for $k \geq K_j$, $X_{j,k} = (x_i(s_j))_{i = k - K_j}^{i = k +K_j }$. Up to a subsequence, as $k \to \infty$, $X_{j,k}$ converges to $\bar{X}_j$ which contains $2K_j +1$ accumulation points in $L(s_j)$, the central one chosen to be $\bar{x}_j$. The affine interpolation of the (ordered) collection of points in $\bar{X}_j$ is denoted  $\bar{\gamma}_j \colon [-T,T] \to \R^p$ with $\bar{\gamma}_j(0) = \bar{x}_j$. 

	This construction can be repeated for all $j \in \N$ and using Arzelà-Ascoli, there is a subsequence of $(\bar{\gamma}_j)_{j \in \N}$ which converges uniformly to $\bar{\gamma} \colon [-T,T] \to \R^p$. We have $\bar{\gamma}([-T,T]) \subset L$. Indeed let $t \in [-T, T]$. For any $e > 0$, we have $\|\bar{\gamma}_j(t) - \bar{\gamma}(t)\| \leq e$ for $j$ high enough. By construction, $\bar{\gamma}_j(s_j \lceil t/s_j \rceil) \in L(s_j)$. Since $s_j \lceil t/s_j \rceil$ converges to $t$ and $\bar{\gamma}_j$ is Lipschitz with same constant for all $j$,  then $\|\bar{\gamma}(t) - \bar{\gamma}_j(s_j \lceil t/s_j \rceil) \| \leq 2e$ for all $j$ high enough. Since $e$ was arbitrary, this means $\bar{\gamma}(t) \in L$.
 
    We claim that $\bar{\gamma}$ is a solution to the differential inclusion $\frac{d}{dt} \bar{\gamma}(t) \in Z(\bar{\gamma}(t))$ for almost all $t$.

	Indeed, for any $j \in \N$, there is $k_j \in \N$ such that $\|X_{j,k_j} - \bar{X}_j\| \leq 1/j$. Let $\gamma_j \colon [-T,T] \to \R^p$ be the affine interpolation of $X_{j,k_j}$. Up to the Arzelà-Ascoli subsequence, $\gamma_j \to \bar{\gamma}$ uniformly on $[-T,T]$. Furthermore, for each $j$ 
	\begin{align*}
		\int_{-T}^{T}\dist((\gamma_j(t),\dot{\gamma}_j(t)),\operatorname{graph}[Z])\,dt \leq 2Ts_j \zeta
	\end{align*}
	By \Cref{lem:discreteContinuous}, this shows that $\bar{\gamma}$ is a solution as required.

	The function $\bar{\gamma}$ may be extended to $[-2T, 2T]$, $[-3T, 3T]$, \ldots by taking further subsequences for each $j$ and further Arzelà-Ascoli subsequences. The resulting function is defined on $\R$ and has to be a solution to the differential inclusion. This concludes the proof. 
\end{proof}

The main result stated in \Cref{th:constantStepSizeSemiAlgebraic} is a consequence of the following result and \Cref{lem:SA}.

\begin{theorem}[Convergence for biased subgradient with constant steps]
Under Assumptions \ref{ass:KLmetricReg}, let $C, \rho$ be given by \Cref{lem:estimateSolutionToCrit} and $\epsilon < \bar{\epsilon}$. Set for all $\alpha>0$,  $(x_k(\alpha))_{k \in \N}$, as given by \eqref{eq:subgradVanishingStepSize} with  $\alpha_k = \alpha$, for all $k \in \N$. Then if $\limsup_{\alpha \to 0} \sup_{k \to \infty} \|x_k(\alpha)\| < \infty$,
\begin{align*}
				\lim_{\alpha \to 0} \limsup_{k \to \infty} \dist (f(x_k(\alpha)), \vcrite f ) &=0,\\
				\limsup_{\alpha \to 0} \limsup_{k \to \infty} \dist (x_k(\alpha), \crit f ) &\leq C \epsilon^{\rho}. 
\end{align*}
The boundedness condition always holds if $f$ is coercive.
\label{th:constantStepSize}
\end{theorem}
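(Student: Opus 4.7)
The plan is to combine \Cref{lem:limitingSet}, which identifies the accumulation set of constant step-size recursions in the small-step limit with a set invariant under the differential inclusion \eqref{eq:biasDI}, with \Cref{cor:invariantSet}, which bounds such invariant sets near $\crit f$ whenever their image lies in $\vcrite f$. Throughout, fix $M>0$ such that $\|x_k(\alpha)\|\le M$ for all $k$ and for $\alpha$ small enough; this is granted by the boundedness hypothesis and, in the coercive case, follows from \Cref{lem:descentSequence} applied with a sufficiently large level.

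For each small $\alpha>0$, let $L(\alpha)$ denote the accumulation set of $(x_k(\alpha))_{k\in\N}$; it is a nonempty compact subset of the ball of radius $M$. Define
\begin{equation*}
L \;=\; \bigcap_{\alpha>0}\, \mathrm{cl}\!\left( \bigcup_{0<s\le\alpha} L(s)\right).
\end{equation*}
Applying \Cref{lem:limitingSet} to the set-valued map $Z=-\partialc f+\bar B(0,\epsilon)$ (which satisfies \Cref{ass:setValuedMap}) yields that $L$ is nonempty and invariant: every $\bar x\in L$ is the origin of a solution $x\colon\R\to\R^p$ to \eqref{eq:biasDI} with $x(\R)\subset L$.

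The heart of the proof is to establish $f(L)\subset \vcrite f$. Arguing by contradiction, suppose some $\bar x\in L$ satisfies $l:=f(\bar x)\notin \vcrite f$, which is closed under \Cref{ass:KLmetricReg}. Apply \Cref{lem:descentSequence2} to this $l$ and this $M$: there exist $\eta>0$ and $\bar\alpha>0$ such that for every constant step $\alpha\le\bar\alpha$, either $\liminf_k f(x_k(\alpha))>l+2\eta$ or $\limsup_k f(x_k(\alpha))<l-2\eta$. In both cases $f(L(\alpha))\cap(l-\eta,l+\eta)=\emptyset$ for $\alpha\le\bar\alpha$. But $\bar x\in L$ gives $s_j\to 0$ and $\bar x_j\in L(s_j)$ with $\bar x_j\to\bar x$, hence $f(\bar x_j)\to l$; for $j$ large, $s_j\le\bar\alpha$ and $f(\bar x_j)\in(l-\eta,l+\eta)$, a contradiction. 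Thus $f(L)\subset\vcrite f$, and \Cref{cor:invariantSet} gives $\dist(z,\crit f)\le C\epsilon^\rho$ for every $z\in L$.

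It remains to transfer these two facts from $L$ to the double limit $\alpha\to 0^+$, $k\to\infty$. For the value statement, assume by contradiction that there exist $\delta>0$, $\alpha_j\to 0^+$, and indices $k_m^{(j)}\to\infty$ such that $\dist(f(x_{k_m^{(j)}}(\alpha_j)),\vcrite f)>\delta$. Up to a subsequence in $m$, $x_{k_m^{(j)}}(\alpha_j)\to \bar x_j\in L(\alpha_j)$, with $\dist(f(\bar x_j),\vcrite f)\ge\delta$. Up to a further subsequence in $j$, $\bar x_j\to\bar x$; since for every $\alpha>0$ one has $\bar x_j\in\bigcup_{s\le\alpha}L(s)$ eventually in $j$, we conclude $\bar x\in L$, yielding $f(\bar x)\notin\vcrite f$, contradicting $f(L)\subset\vcrite f$. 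The iterate statement is proved by the same diagonal extraction applied to $\dist(\cdot,\crit f)$ and to the bound $C\epsilon^\rho+\delta$, using $\dist(z,\crit f)\le C\epsilon^\rho$ on $L$. The main subtlety, which is precisely where \Cref{lem:limitingSet} is indispensable, is that individual $L(\alpha)$ need not satisfy $f(L(\alpha))\subset\vcrite f$: uniformity in $\alpha$ emerges only after taking the nested closure defining $L$.
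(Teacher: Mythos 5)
Your proposal is correct and follows essentially the same route as the paper: define $L(\alpha)$ and the nested limit set $L$, invoke \Cref{lem:limitingSet} for invariance, use \Cref{lem:descentSequence2} to exclude $\epsilon$-regular values from $f(L)$, and conclude with \Cref{cor:invariantSet}. The only difference is cosmetic — you argue the inclusion $f(L)\subset\vcrite f$ by contradiction rather than directly, and you spell out the diagonal-extraction step transferring the bounds on $L$ to the double limit, which the paper leaves implicit.
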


\begin{proof}
	By assumption, there is $M> 0$ such that for small enough $\alpha$, $\|x_k(\alpha)\| \leq M$ for all $k \in \N$ large enough.

	For each $s$ denote by $L(s)$ the set of limit points of $x_k(s)$, which is closed. 
	We set  $L = \cap_{\alpha > 0} \mathrm{cl} \cup_{s\leq \alpha} L(s)$ which is closed as an intersection of closed set, and bounded by $M$. Intuitively $L$ is the set of accumulation points of accumulation points of $(x_k(\alpha))_{k \in \N}$ as $\alpha \to 0$. By \Cref{lem:limitingSet}, $L$ is invariant. We will show that $f(L) \subset \vcrite f$ from which the first statement follows. The second is then a consequence of \Cref{cor:invariantSet}.

	For any $l \notin \vcrite f$, since $\vcrite f$ is closed by \Cref{ass:KL-MR}, \Cref{lem:descentSequence2} ensures that there is $\bar{\alpha}$ and $\eta$, such that for any $s \leq \bar{\alpha}$, $[l-2\eta,l+2\eta] \cap  f(L(s)) = \emptyset$. We deduce that  
	\begin{align*}
			[l-2\eta,l+2\eta] \cap  f\left(\cup_{s \leq \bar{\alpha}} L(s)\right) &= \emptyset \\ 
			[l-\eta,l+\eta] \cap  f\left(\mathrm{cl} \cup_{s \leq \bar{\alpha}} L(s)\right) &= \emptyset \:\:\mbox{ (use continuity).}\\ 
	\end{align*}
	Whence we see that for any $l \not \in \vcrite f$, $l \not \in f\left(\cap_{\alpha > 0}\mathrm{cl} \cup_{s \leq \alpha } L(s)\right)$, and therefore $f(L) \subset \vcrite f$ through complementation. This proves the first claim. 

    The last comment on boundedness follows from \Cref{lem:descentSequence}.
\end{proof}

\section{The convex case}
\label{sec:convex}

\begin{proof}[of \Cref{th:convexSemiAlgebraic}]
	Consider a sequence of step sizes $(\alpha_k)_{k \in \N}$ and a iterates $(x_k)_{k \in \N}$ such that for all $k \in \N$,
	\begin{align*}
	    x_{k+1} = x_k - \alpha_k (v_k + b_k),
	\end{align*}
	for some $v_k \in \partialc f(x_k)$ and $\|b_k\| \leq \epsilon$.
	We have for any $k \in \N$ and any $x^* \in X^*$,
	\begin{align*}
	    \frac{1}{2}\|x_{k+1} - x^*\|_2^2 &= \frac{1}{2}\|x_k - \alpha_k (v_k+b_k) - x^*\|_2^2\\ 
	    &=\frac{1}{2}\|x_k - x^*\|_2^2 + \alpha_k (v_k+b_k)^T (x^* - x_k ) + \frac{\alpha_k^2}{2} \|v_k + b_k\|_2^2 \\
	    &\leq\frac{1}{2}\|x_k - x^*\|_2^2 - \alpha_k (f(x_k) - f^*) + \alpha_k \epsilon \|x_k - x^*\|  \\ & +  \frac{\alpha_k^2}{2} (L+\epsilon)^2. 
	\end{align*}
	We deduce, using the error bound \eqref{eq:errorbound}, that
	\begin{align}
            \label{eq:mainIneqConvex}
	    & \frac{1}{2}\dist(x_{k+1}, X^*)^2 \\
            \leq\;&\frac{1}{2}\dist(x_{k}, X^*)^2 - \alpha_k (f(x_k) - f^*) + \alpha_k \epsilon \dist(x_{k}, X^*) +  \frac{\alpha_k^2}{2} (L+\epsilon)^2 \nonumber\\
	    \leq\;& \frac{1}{2}\dist(x_{k}, X^*)^2 + \alpha_k \left(\epsilon \frac{c}{2} \left((f(x_k) - f^*)^a + (f(x_k) - f^*)\right) -  (f(x_k) - f^*)\right)\nonumber \\ & +\frac{\alpha_k^2}{2} (L+\epsilon)^2. \nonumber
	\end{align}
	\paragraph{If $a=1$:}
	This means that we have a sharp function, then, as long as $\epsilon c < 1$, we obtain the same global rate as the classical subgradient algorithm, modulo a constant $(1 - \epsilon c)$, and all accumulation points in the argmin. Indeed, summing the previous inequality from $i = 0$ to $k$ leads to
    \begin{equation*}
        (1 - \epsilon c) \frac{\sum_{i=0}^k \alpha_i (f(x_i) - f^*)}{\sum_{i=0}^k \alpha_i} \leq \frac{\|x_0 - x^{*}\|^2_2 + (L + \epsilon)^2 \sum_{i = 0}^{k} \alpha_i^2}{\sum_{i = 0}^{k} \alpha_i}.
    \end{equation*}
    We remark that this is exactly the claimed formula for $a = 1$.
\paragraph{If $a<1$:}
Then we can use the following lemma

\begin{lemma}
    Set $g(\delta) = s \delta^t - \delta$ for some $t \in (0,1)$ and $s> 0$, then for all $\delta \in \RR_+$
    \begin{align*}
        g(\delta) \leq - (1 - t) (\delta - s^{\frac{1}{1-t}}).
    \end{align*}
    \label{lem:numericLemma}
\end{lemma}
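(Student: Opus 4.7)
The plan is to reduce the target inequality to a one-dimensional optimization. Rewriting, the claim is equivalent to
\begin{equation*}
s \delta^t - t \delta \leq (1-t)\, s^{\frac{1}{1-t}} \quad \text{for all } \delta \geq 0,
\end{equation*}
so it suffices to maximize $h(\delta) := s \delta^t - t \delta$ on $\R_+$ and verify that the maximum equals $(1-t)\,s^{1/(1-t)}$. Since $t \in (0,1)$, the function $\delta \mapsto s\delta^t$ is strictly concave on $(0,\infty)$, so $h$ is strictly concave, and its unique maximizer $\delta^{*}$ satisfies $h'(\delta^{*}) = s t (\delta^{*})^{t-1} - t = 0$, i.e.\ $\delta^{*} = s^{1/(1-t)}$. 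Substituting,
\begin{align*}
h(\delta^{*})
&= s \cdot s^{t/(1-t)} - t \cdot s^{1/(1-t)} = s^{1/(1-t)} - t\,s^{1/(1-t)} = (1-t)\,s^{1/(1-t)},
\end{align*}
which gives the inequality; the endpoint $\delta = 0$ yields $h(0)=0$, trivially below the bound.

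Equivalently, and more slickly, the inequality is a direct instance of Young's inequality with conjugate exponents $p = 1/t$ and $q = 1/(1-t)$ (note $1/p + 1/q = t + (1-t) = 1$): applied to $a = \delta^t$ and $b = s$, it yields $s \delta^t = ab \leq \tfrac{a^p}{p} + \tfrac{b^q}{q} = t\delta + (1-t)\,s^{1/(1-t)}$, which rearranges to the claim.

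There is no real difficulty here; the lemma is an elementary convexity fact. Its role in the main argument of \Cref{th:convexSemiAlgebraic} is to convert the sublinear term $\epsilon \tfrac{c}{2}(f(x_k) - f^{*})^a$ appearing in \eqref{eq:mainIneqConvex} into a quantity that is affine in $f(x_k) - f^{*}$ plus an additive constant, so that telescoping and averaging over $i = 0,\ldots,k$ produce the stated complexity bound with offset $(1-a)(\epsilon c)^{1/(1-a)}$.
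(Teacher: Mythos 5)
Your proof is correct and rests on the same elementary fact as the paper's: the paper bounds the concave function $g$ by its tangent line at $\delta_0 = s^{1/(1-t)}$ (where $g(\delta_0)=0$ and $g'(\delta_0)=t-1$), which is exactly your computation of $\max_{\delta\ge 0}\bigl(s\delta^t - t\delta\bigr)$ rearranged. The Young's inequality remark is a pleasant alternative phrasing of the same convexity bound.
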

\begin{proof}
    The function $g$ is concave on $\RR_+$, set $\delta_0 = s^{\frac{1}{1 - t}}$, we have $g(\delta_0) = 0$ and $g'(\delta_0) = t - 1 < 0$  and the result follows by concavity.
\end{proof}

We obtain setting $\delta = f(x_k) - f^*$ applying \Cref{lem:numericLemma} to \eqref{eq:mainIneqConvex} 
	\begin{align*}
	    &\frac{1}{2}\dist(x_{k+1}, X^*)^2 -  \frac{1}{2}\dist(x_{k}, X^*)^2 \\
            \leq\;&   \frac{\alpha_k}{2} \left(-(1-a) \left( (f(x_k) - f^*) -  (\epsilon c)^{\frac{1}{1-a}}\right) + (\epsilon c-1) (f(x_k) - f^*) \right) \\ & +  \frac{\alpha_k^2}{2} (L+\epsilon)^2\\
            = \; & \frac{\alpha_k}{2} \left((\epsilon c + a - 2) (f(x_k) - f^*) + (1-a) (\epsilon c)^{\frac{1}{1-a}}\right) +  \frac{\alpha_k^2}{2} (L+\epsilon)^2
	\end{align*}
         We deduce by summation
	\begin{align*}
	   \left( 2 - a - \epsilon c \right)\frac{\sum_{i=0}^k \alpha_i (f(x_i) - f^*)}{\sum_{i=0}^k \alpha_i} & \leq  (1 - a)(\epsilon c)^{\frac{1}{1-a}} \\ & + \frac{\|x_0 - x_*\|^2 + (L+\epsilon)^2\sum_{i=0}^k  \alpha_i^2 }{\sum_{i=0}^k \alpha_i}.
	\end{align*}
\end{proof}

\section*{Acknowledgements}
The authors acknowledge the support of Centre Lagrange.
 JB, TL, EP thank AI Interdisciplinary Institute ANITI funding, through the French ``Investments for the Future -- PIA3'' program under the grant agreement ANR-19-PI3A0004, Air Force Office of Scientific Research, Air Force Material Command, USAF, under grant numbers FA8655-22-1-7012, ANR Chess, grant ANR-17-EURE-0010, ANR Regulia.

%
%

\bibliographystyle{spmpsci}      
\bibliography{references}   

\begin{thebibliography}{10}
\providecommand{\url}[1]{{#1}}
\providecommand{\urlprefix}{URL }
\expandafter\ifx\csname urlstyle\endcsname\relax
  \providecommand{\doi}[1]{DOI~\discretionary{}{}{}#1}\else
  \providecommand{\doi}{DOI~\discretionary{}{}{}\begingroup
  \urlstyle{rm}\Url}\fi

\bibitem{infinite}
Aliprantis, C., Border, K.: Infinite Dimensional Analysis.
\newblock Springer Berlin, Heidelberg (2006).
\newblock \doi{10.1007/3-540-29587-9}

\bibitem{amos2017optnet}
Amos, B., Kolter, J.Z.: Optnet: Differentiable optimization as a layer in
  neural networks.
\newblock In: International Conference on Machine Learning, pp. 136--145. PMLR
  (2017)

\bibitem{Filippov1988DifferentialEW}
Arscott, F., Filippov, A.: Differential Equations with Discontinuous Righthand
  Sides: Control Systems.
\newblock Mathematics and its Applications. Springer Netherlands (1988)

\bibitem{aspelmeier2016local}
Aspelmeier, T., Charitha, C., Luke, D.R.: Local linear convergence of the
  admm/douglas--rachford algorithms without strong convexity and application to
  statistical imaging.
\newblock SIAM Journal on Imaging Sciences \textbf{9}(2), 842--868 (2016)

\bibitem{aubin1983differential}
Aubin, J.P., Cellina, A.: Differential inclusions: set-valued maps and
  viability theory, vol. 264.
\newblock Springer Science \& Business Media (1983)

\bibitem{bach2011convex}
Bach, F., Jenatton, R., Mairal, J., Obozinski, G.: Convex optimization with
  sparsity-inducing norms.
\newblock Optimization for Machine Learning pp. 19--49 (2011)

\bibitem{ben2009robust}
Ben-Tal, A., El~Ghaoui, L., Nemirovski, A.: Robust optimization, vol.~28.
\newblock Princeton university press (2009)

\bibitem{benaim1998recursive}
Bena{\"\i}m, M.: Recursive algorithms, urn processes and chaining number of
  chain recurrent sets.
\newblock Ergodic Theory and Dynamical Systems \textbf{18}(1), 53--87 (1998)

\bibitem{benaim2005stochastic}
Bena{i}m, M., Hofbauer, J., Sorin, S.: Stochastic approximations and
  differential inclusions.
\newblock SIAM Journal on Control and Optimization \textbf{44}(1), 328--348
  (2005)

\bibitem{benaim2012}
Bena{\"i}m, M., Hofbauer, J., Sorin, S.: Perturbations of set-valued dynamical
  systems, with applications to game theory.
\newblock Dynamic Games and Applications \textbf{2}(2), 195--205 (2012).
\newblock \doi{10.1007/s13235-012-0040-0}.
\newblock \urlprefix\url{https://doi.org/10.1007/s13235-012-0040-0}

\bibitem{benveniste2012adaptive}
Benveniste, A., M{\'e}tivier, M., Priouret, P.: Adaptive algorithms and
  stochastic approximations, vol.~22.
\newblock Springer Science \& Business Media (2012)

\bibitem{bianchi2019constant}
Bianchi, P., Hachem, W., Salim, A.: Constant step stochastic approximations
  involving differential inclusions: stability, long-run convergence and
  applications.
\newblock Stochastics \textbf{91}(2), 288--320 (2019)

\bibitem{bianchi2022convergence}
Bianchi, P., Hachem, W., Schechtman, S.: Convergence of constant step
  stochastic gradient descent for non-smooth non-convex functions.
\newblock Set-Valued and Variational Analysis \textbf{30}(3), 1117--1147 (2022)

\bibitem{blondel2022efficient}
Blondel, M., Berthet, Q., Cuturi, M., Frostig, R., Hoyer, S.,
  Llinares-L{\'o}pez, F., Pedregosa, F., Vert, J.P.: Efficient and modular
  implicit differentiation.
\newblock Advances in neural information processing systems \textbf{35},
  5230--5242 (2022)

\bibitem{bolte2007loja}
Bolte, J., Daniilidis, A., Lewis, A.: The Łojasiewicz inequality for nonsmooth
  subanalytic functions with applications to subgradient dynamical systems.
\newblock SIAM Journal on Optimization \textbf{17}(4), 1205--1223 (2007).
\newblock \doi{10.1137/050644641}.
\newblock \urlprefix\url{https://doi.org/10.1137/050644641}

\bibitem{bolte2007clarke}
Bolte, J., Daniilidis, A., Lewis, A., Shiota, M.: Clarke subgradients of
  stratifiable functions.
\newblock SIAM Journal on Optimization \textbf{18}(2), 556--572 (2007)

\bibitem{bolte2022subgradient}
Bolte, J., Le, T., Pauwels, E.: Subgradient sampling for nonsmooth nonconvex
  minimization.
\newblock SIAM Journal on Optimization \textbf{33}(4), 2542--2569 (2023).
\newblock \doi{10.1137/22M1479178}

\bibitem{bolte2017error}
Bolte, J., Nguyen, T.P., Peypouquet, J., Suter, B.W.: From error bounds to the
  complexity of first-order descent methods for convex functions.
\newblock Mathematical Programming \textbf{165}, 471--507 (2017)

\bibitem{bolte2021conservative}
Bolte, J., Pauwels, E.: Conservative set valued fields, automatic
  differentiation, stochastic gradient methods and deep learning.
\newblock Mathematical Programming \textbf{188}(1), 19--51 (2021)

\bibitem{bolte2022long}
Bolte, J., Pauwels, E., Rios-Zertuche, R.: Long term dynamics of the
  subgradient method for lipschitz path differentiable functions.
\newblock Journal of the European Mathematical Society  (2022)

\bibitem{bolte2023one}
Bolte, J., Pauwels, E., Vaiter, S.: One-step differentiation of iterative
  algorithms.
\newblock Advances in Neural Information Processing Systems \textbf{36} (2023)

\bibitem{borkar2009stochastic}
Borkar, V.S.: Stochastic approximation: a dynamical systems viewpoint, vol.~48.
\newblock Springer (2009)

\bibitem{clarke1983optimization}
Clarke, F.H.: Optimization and nonsmooth analysis.
\newblock SIAM (1983)

\bibitem{cohen2020edge}
Cohen, J., Kaur, S., Li, Y., Kolter, J.Z., Talwalkar, A.: Gradient descent on
  neural networks typically occurs at the edge of stability.
\newblock In: International Conference on Learning Representations (2020)

\bibitem{combettes2011proximal}
Combettes, P.L., Pesquet, J.C.: Proximal splitting methods in signal
  processing.
\newblock In: Fixed-point algorithms for inverse problems in science and
  engineering, pp. 185--212. Springer (2011)

\bibitem{coste2000introduction}
Coste, M.: An introduction to o-minimal geometry.
\newblock Istituti editoriali e poligrafici internazionali Pisa (2000)

\bibitem{davis2020stochastic}
Davis, D., Drusvyatskiy, D., Kakade, S., Lee, J.D.: Stochastic subgradient
  method converges on tame functions.
\newblock Foundations of computational mathematics \textbf{20}(1), 119--154
  (2020)

\bibitem{de2017understanding}
De~Sa, C., Feldman, M., R{\'e}, C., Olukotun, K.: Understanding and optimizing
  asynchronous low-precision stochastic gradient descent.
\newblock In: Proceedings of the 44th annual international symposium on
  computer architecture, pp. 561--574 (2017)

\bibitem{dieuleveut2023stochastic}
Dieuleveut, A., Fort, G., Moulines, E., Wai, H.T.: Stochastic approximation
  beyond gradient for signal processing and machine learning.
\newblock IEEE Transactions on Signal Processing  (2023)

\bibitem{donoho2006compressed}
Donoho, D.L.: Compressed sensing.
\newblock IEEE Transactions on information theory \textbf{52}(4), 1289--1306
  (2006)

\bibitem{doucet2017asymptotic}
Doucet, A., Tadic, V.: Asymptotic bias of stochastic gradient search.
\newblock Annals of Applied Probability \textbf{27}(6) (2017)

\bibitem{van1998tame}
van~den Dries, L.: Tame topology and o-minimal structures, vol. 248.
\newblock Cambridge university press (1998)

\bibitem{ermol1998stochastic}
Ermol'ev, Y.M., Norkin, V.: Stochastic generalized gradient method for
  nonconvex nonsmooth stochastic optimization.
\newblock Cybernetics and Systems Analysis \textbf{34}(2), 196--215 (1998)

\bibitem{Ermoliev1983StochasticQM}
Ermoliev, Y.M.: Stochastic quasigradient methods and their application to
  system optimization.
\newblock Stochastics \textbf{9}, 1--36 (1983)

\bibitem{fedus2022switch}
Fedus, W., Zoph, B., Shazeer, N.: Switch transformers: Scaling to trillion
  parameter models with simple and efficient sparsity.
\newblock Journal of Machine Learning Research \textbf{23}(120), 1--39 (2022)

\bibitem{fort1999asymptotic}
Fort, J.C., Pages, G.: Asymptotic behavior of a markovian stochastic algorithm
  with constant step.
\newblock SIAM journal on control and optimization \textbf{37}(5), 1456--1482
  (1999)

\bibitem{ghadimi2013stochastic}
Ghadimi, S., Lan, G.: Stochastic first-and zeroth-order methods for nonconvex
  stochastic programming.
\newblock SIAM Journal on Optimization \textbf{23}(4), 2341--2368 (2013)

\bibitem{ghadimi2016mini}
Ghadimi, S., Lan, G., Zhang, H.: Mini-batch stochastic approximation methods
  for nonconvex stochastic composite optimization.
\newblock Mathematical Programming \textbf{155}(1), 267--305 (2016)

\bibitem{griewank2003piggyback}
Griewank, A., Faure, C.: Piggyback differentiation and optimization.
\newblock In: Large-scale PDE-constrained optimization, pp. 148--164. Springer
  (2003)

\bibitem{hesse2013nonconvex}
Hesse, R., Luke, D.R.: Nonconvex notions of regularity and convergence of
  fundamental algorithms for feasibility problems.
\newblock SIAM Journal on Optimization \textbf{23}(4), 2397--2419 (2013)

\bibitem{huh2014online}
Huh, W.T., Rusmevichientong, P.: Online sequential optimization with biased
  gradients: theory and applications to censored demand.
\newblock INFORMS Journal on Computing \textbf{26}(1), 150--159 (2014)

\bibitem{josz2023global}
Josz, C., Lai, L.: Global stability of first-order methods for coercive tame
  functions.
\newblock Mathematical Programming pp. 1--26 (2023)

\bibitem{Josz2023}
Josz, C., Lai, L.: Lyapunov stability of the subgradient method with constant
  step size.
\newblock Mathematical Programming \textbf{202}(1), 387--396 (2023).
\newblock \doi{10.1007/s10107-023-01936-6}.
\newblock \urlprefix\url{https://doi.org/10.1007/s10107-023-01936-6}

\bibitem{kushner2003stochastic}
Kushner, H., Yin, G.: Stochastic Approximation and Recursive Algorithms and
  Applications.
\newblock Stochastic Modelling and Applied Probability. Springer New York
  (2003)

\bibitem{lan2020algorithms}
Lan, G., Zhou, Z.: Algorithms for stochastic optimization with function or
  expectation constraints.
\newblock Computational Optimization and Applications \textbf{76}(2), 461--498
  (2020)

\bibitem{lecun1989backpropagation}
LeCun, Y., Boser, B., Denker, J.S., Henderson, D., Howard, R.E., Hubbard, W.,
  Jackel, L.D.: Backpropagation applied to handwritten zip code recognition.
\newblock Neural computation \textbf{1}(4), 541--551 (1989)

\bibitem{lee2022MR}
Lee, J.H., Pham, T.S.: Openness, hölder metric regularity, and hölder
  continuity properties of semialgebraic set-valued maps.
\newblock SIAM Journal on Optimization \textbf{32}(1), 56--74 (2022).
\newblock \doi{10.1137/20M1331901}.
\newblock \urlprefix\url{https://doi.org/10.1137/20M1331901}

\bibitem{ljung1977analysis}
Ljung, L.: Analysis of recursive stochastic algorithms.
\newblock IEEE transactions on automatic control \textbf{22}(4), 551--575
  (1977)

\bibitem{loshchilov2017sgdr}
Loshchilov, I., Hutter, F.: {SGDR}: Stochastic gradient descent with warm
  restarts.
\newblock In: International Conference on Learning Representations (2017)

\bibitem{luke2020necessary}
Luke, D.R., Teboulle, M., Thao, N.H.: Necessary conditions for linear
  convergence of iterated expansive, set-valued mappings.
\newblock Mathematical Programming \textbf{180}, 1--31 (2020)

\bibitem{majewski2018analysis}
Majewski, S., Miasojedow, B., Moulines, E.: Analysis of nonsmooth stochastic
  approximation: the differential inclusion approach.
\newblock arXiv preprint arXiv:1805.01916  (2018)

\bibitem{makela2002survey}
M{\"a}kel{\"a}, M.: Survey of bundle methods for nonsmooth optimization.
\newblock Optimization methods and software \textbf{17}(1), 1--29 (2002)

\bibitem{mishchenko2019distributed}
Mishchenko, K., Gorbunov, E., Tak{\'a}{\v{c}}, M., Richt{\'a}rik, P.:
  Distributed learning with compressed gradient differences.
\newblock arXiv preprint arXiv:1901.09269  (2019)

\bibitem{nedic2010effect}
Nedi{\'c}, A., Bertsekas, D.P.: The effect of deterministic noise in
  subgradient methods.
\newblock Mathematical programming \textbf{125}(1), 75--99 (2010)

\bibitem{nemirovski2009robust}
Nemirovski, A., Juditsky, A., Lan, G., Shapiro, A.: Robust stochastic
  approximation approach to stochastic programming.
\newblock SIAM Journal on optimization \textbf{19}(4), 1574--1609 (2009)

\bibitem{nemirovskij1983problem}
Nemirovskij, A.S., Yudin, D.B.: Problem complexity and method efficiency in
  optimization.
\newblock Wiley-Interscience (1983)

\bibitem{nguyen2023stochastic}
Nguyen, N., Yin, G.: Stochastic approximation with discontinuous dynamics,
  differential inclusions, and applications.
\newblock The Annals of Applied Probability \textbf{33}(1), 780--823 (2023)

\bibitem{Norkin1978nonlocal}
Norkin, V.I.: Nonlocal minimization algorithms of nondifferentiable functions.
\newblock Cybernetics \textbf{14}(5), 704--707 (1978).
\newblock \doi{10.1007/BF01069307}

\bibitem{radford2018improving}
Radford, A., Narasimhan, K., Salimans, T., Sutskever, I., et~al.: Improving
  language understanding by generative pre-training (2018)

\bibitem{ramaswamy2017analysis}
Ramaswamy, A., Bhatnagar, S.: Analysis of gradient descent methods with
  nondiminishing bounded errors.
\newblock IEEE Transactions on Automatic Control \textbf{63}(5), 1465--1471
  (2017)

\bibitem{roth2013stochastic}
Roth, G., Sandholm, W.H.: Stochastic approximations with constant step size and
  differential inclusions.
\newblock SIAM Journal on Control and Optimization \textbf{51}(1), 525--555
  (2013)

\bibitem{russell2018quantitative}
Russell~Luke, D., Thao, N.H., Tam, M.K.: Quantitative convergence analysis of
  iterated expansive, set-valued mappings.
\newblock Mathematics of Operations Research \textbf{43}(4), 1143--1176 (2018)

\bibitem{shor2012minimization}
Shor, N.Z.: Minimization methods for non-differentiable functions, vol.~3.
\newblock Springer Science \& Business Media (2012)

\bibitem{solodov1998error}
Solodov, M.V., Zavriev, S.: Error stability properties of generalized
  gradient-type algorithms.
\newblock Journal of Optimization Theory and Applications \textbf{98}, 663--680
  (1998)

\bibitem{spall2012stochastic}
Spall, J.C.: Stochastic optimization.
\newblock Handbook of computational statistics: Concepts and methods pp.
  173--201 (2012)

\bibitem{tadic2011asymptotic}
Tadi{\'c}, V.B., Doucet, A.: Asymptotic bias of stochastic gradient search.
\newblock In: 2011 50th IEEE Conference on Decision and Control and European
  Control Conference, pp. 722--727. IEEE (2011)

\bibitem{tibshirani1996regression}
Tibshirani, R.: Regression shrinkage and selection via the lasso.
\newblock Journal of the Royal Statistical Society Series B: Statistical
  Methodology \textbf{58}(1), 267--288 (1996)

\bibitem{wangni2018gradient}
Wangni, J., Wang, J., Liu, J., Zhang, T.: Gradient sparsification for
  communication-efficient distributed optimization.
\newblock Advances in Neural Information Processing Systems \textbf{31} (2018)

\bibitem{xiao2023convergence}
Xiao, N., Hu, X., Toh, K.C.: Sgd-type methods with guaranteed global stability
  in nonsmooth nonconvex optimization.
\newblock arXiv preprint arXiv:2307.10053  (2023)

\end{thebibliography}

\end{document}